\documentclass[reqno, letterpaper, oneside]{article}

\usepackage{amssymb}
\usepackage{amsmath}
\usepackage{amsthm}

\usepackage{bbm}
\usepackage{setspace}
\usepackage{geometry}
\geometry{
  hmargin={25mm, 25mm}, 
  vmargin={25mm, 25mm},
  headsep=10mm,
  headheight=5mm,
  footskip=10mm
}

\parskip 1ex            
\parindent 5ex		

\newcommand{\indic}[1]{\mathbbm{1}_{\{{#1}\}}}

\newtheorem{theorem}{Theorem}[section]
\newtheorem{lemma}[theorem]{Lemma}
\newtheorem{remark}[theorem]{Remark}
\newtheorem{conjecture}[theorem]{Conjecture}

\newtheorem{claim}[theorem]{Claim}

\newtheorem{question}[theorem]{Question}

\newcommand\mc{\mathcal}

   \def\D{\Delta}
 \def\f{\phi}

\def\cH{\mathcal{H}}
\def\cD{\mathcal{D}}
\def\cE{\mathcal{E}}
\def\cF{\mathcal{F}}

\def\cC{\mathcal{C}}

\def\cW{\mathcal{W}}
\def\cT{\mathcal{T}}
\def\cG{\mathcal{G}}

\def\cU{\mathcal{U}}

\def\fF{\mathfrak{F}}

\def\he{\hat{e}}
\def\hq{\hat{q}}
\def\hw{\hat{w}}
\def\hx{\hat{x}}
\def\hy{\hat{y}}

\def\tq{\tilde{q}}

\def\Pr{\mathbb{P}}

\def\E{\mathbb{E}}
\def\bF{\mathbb{F}}

\newcommand\mb{\mathbb}

\newcommand{\refT}[1]{Theorem~\ref{#1}}

\newcommand{\refL}[1]{Lemma~\ref{#1}}
\newcommand{\refR}[1]{Remark~\ref{#1}}
\newcommand{\refCl}[1]{Claim~\ref{#1}}
\newcommand{\refS}[1]{Section~\ref{#1}}

\newcommand{\refConj}[1]{Conjecture~\ref{#1}}
\newcommand{\refQ}[1]{Question~\ref{#1}}

\newcommand\set[1]{\ensuremath{\{#1\}}}
\newcommand\bigset[1]{\ensuremath{\bigl\{#1\bigr\}}}

\newcommand\biggset[1]{\ensuremath{\biggl\{#1\biggr\}}}

\newcommand\xpar[1]{(#1)}
\newcommand\bigpar[1]{\bigl(#1\bigr)}
\newcommand\Bigpar[1]{\Bigl(#1\Bigr)}
\newcommand\biggpar[1]{\biggl(#1\biggr)}
\newcommand\Biggpar[1]{\Biggl(#1\Biggr)}

\newcommand\bigsqpar[1]{\bigl[#1\bigr]}
\newcommand\Bigsqpar[1]{\Bigl[#1\Bigr]}
\newcommand\biggsqpar[1]{\biggl[#1\biggr]}
\newcommand\Biggsqpar[1]{\Biggl[#1\Biggr]}

\newcommand\bigcpar[1]{\bigl\{#1\bigr\}}
\newcommand\Bigcpar[1]{\Bigl\{#1\Bigr\}}
\newcommand\biggcpar[1]{\biggl\{#1\biggr\}}
\newcommand\Biggcpar[1]{\Biggl\{#1\Biggr\}}

\newcommand\bigabs[1]{\bigl|#1\bigr|}

\def\rompar(#1){\textup(#1\textup)}    

\def\xexp(#1){e^{#1}}
\newcommand\bigceil[1]{\big\lceil#1\big\rceil}
\newcommand\ceil[1]{\lceil#1\rceil}





\renewcommand{\emptyset}{\varnothing} 


\let\OLDthebibliography\thebibliography
\renewcommand\thebibliography[1]{
  \OLDthebibliography{#1}
  \setlength{\parskip}{0pt}
  \setlength{\itemsep}{0pt plus 0.3ex}
}

\begin{document}

\title{Large girth approximate Steiner triple systems} 
\author{Tom Bohman\thanks{Department of Mathematical Sciences, Carnegie Mellon
University, Pittsburgh, PA 15213, USA. Email: {\tt tbohman@math.cmu.edu}.} 
\and Lutz Warnke\thanks{School of Mathematics, Georgia Institute of Technology, 
Atlanta GA~30332, USA. E-mail: {\tt warnke@math.gatech.edu}.
Research partially supported by NSF Grant DMS-1703516 and a Sloan Research Fellowship.}}
\date{July 31, 2018; revised January 2, 2019} 

\renewcommand{\thefootnote}{\fnsymbol{footnote}}
\footnotetext{\hspace{-0.5em}AMS 2000 Mathematics Subject Classification: 05B07, 05C80, 60C05, 05B30, 60G99}
\renewcommand{\thefootnote}{\arabic{footnote}} 

\maketitle

\begin{abstract}
In~1973 Erd\H{o}s asked whether there are $n$-vertex partial Steiner triple systems with arbitrary high girth and quadratically many triples.   
(Here girth is defined as the smallest integer~$g \ge 4$ for which some~$g$-element vertex-set contains at least~$g-2$~triples.) 

We answer this question, by showing existence of approximate Steiner triple systems with arbitrary high girth. 
More concretely, for any fixed~$\ell \ge 4$ we show that 
a natural constrained random process typically produces a partial Steiner triple system with~$ (1/6 -o(1)) n^2 $ triples and girth larger than~$\ell$. 
The process iteratively adds random triples subject to the constraint that the girth remains larger than~$\ell$. 
Our result is best possible up to the~$o(1)$-term, which is a negative power of~$n$. 
\end{abstract}

\section{Introduction} 
A {\em Steiner triple system} is a $3$-uniform hypergraph~$\cH$ with the property that
every $2$-element subset of its vertex-set $V(\cH)$ is contained in exactly one triple of~$\cH$ 
(so~$\cH$ decomposes the complete graph with vertex-set~$V(\cH)$ into edge-disjoint triangles). 
Steiner triple systems and their many natural generalizations
are central to combinatorics, and have been studied since the work of Pl\"ucker, Kirkman,
and Steiner in the mid-nineteenth century (see \cite{history} for discussion of this history, and~\cite{designs1,designs3,designs4,designs2} for recent breakthroughs).

In this paper we consider a `high-girth' generalization of Steiner triple systems proposed by Erd\H{o}s~\cite{conjecture}.
We define the \emph{girth} of a $3$-uniform hypergraph to be the smallest~$g \ge 4$ 
for which there is a set of~$g$ vertices that spans at least~$g-2$ triples. 
All known constructions of Steiner triple systems have small girth (see, e.g.,~\cite{AntiPasch,Sparse5,Sparse5b,Sparse6}), 
and there seems to be no simple reason to believe that this should be necessary. 
It thus is natural to ask whether or not there are Steiner triple systems of arbitrarily large girth (also called `locally sparse'). 
Simple divisibility reasons enforce the (well-known) necessary condition $n \equiv 1,3 \mod 6$ below.  
\begin{question}[Erd\H{o}s, 1973]
\label{quest:proper}
Let $ \ell \ge 4 $.  Does there exist $n_0 = n_0(\ell) $ such that there are 
$n$-vertex Steiner triple systems with girth greater than~$\ell$ for every~$n \ge n_0$ with~$n \equiv 1,3 \mod 6$?%
\end{question}
\noindent
This question remains largely open; see~\cite{AntiPasch,Sparse5,Sparse5b,Sparse6} for the most recent
developments (which answer~this question only for~$\ell \le 6$). 
Erd\H{o}s~\cite{conjecture} also asked an approximate version of \refQ{quest:proper}.  
A~$3$-uniform hypergraph~$\cH$ is called a {\em partial Steiner triple system} 
if every $2$-element vertex-subset is contained in at most one triple of~$\cH$.  
Note that any partial Steiner triples
system on $n$ vertices has at most~$\frac{1}{3} \binom{n}{2} = n^2/6 - \Theta(n)$~triples.
\begin{question}[Erd\H{o}s, 1973]
\label{quest:approx}
For which~$\ell \ge 4$ and~$c \in (0,1/6)$ are there $n$-vertex partial Steiner triple systems  
with at least~$c n^2$ triples and girth larger than~$\ell$ for all $n \ge n_0(\ell,c)$? 
\end{question}
\noindent
In~1993 Lefmann, Phelps, and R{\"o}dl~\cite{LPR} showed that for any~$\ell \ge 4$ one can take~$c=c_\ell>0$ with~$c_\ell \to 0$ as~$\ell \to \infty$, 
and raised the question whether one can take a constant~$c>0$ that does not depend on~$\ell$. 
This natural question was also formulated more recently by Ellis and Linial~\cite{EL} (see also~\cite{S}).

In this paper we answer Erd\H{o}s' \refQ{quest:approx}, 
by showing existence of approximate Steiner triple systems with arbitrary high girth. 
Regarding the above-mentioned 
questions from~\cite{LPR,EL}, this implies that one can take~$c_\ell \sim 1/6$ for any~$\ell \ge 4$. 
These results 
 were obtained independently by Glock, K\"uhn, Lo, and Osthus~\cite{GKLO}.\footnote{The quantitative result of this paper is slightly stronger than~\cite{GKLO}, which gives~$(1-o(1))n^2/6$ instead of~$(1-n^{-\beta_\ell})n^2/6$~triples.} 
\begin{theorem}[Main result]%
\label{thm:top}%
For every~$\ell \ge 4$ there are~$n_\ell,\beta_\ell>0$ such that, for all $n \ge n_\ell$, 
there exists an $n$-vertex partial Steiner triple system
with at least~$\bigpar{1-n^{-\beta_\ell}} n^2/6$ triples 
and girth larger than~$\ell$.  
\end{theorem}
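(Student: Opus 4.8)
The plan is to analyze the constrained random greedy process described in the introduction directly, by the differential equation method. Formally, set $\cH(0):=\emptyset$, and given $\cH(i)$ call a triple $T$ \emph{available} if $\cH(i)\cup\{T\}$ is still a partial Steiner triple system of girth larger than~$\ell$; write $\cA(i)$ for the set of available triples. If $\cA(i)\neq\emptyset$ we pick $T_{i+1}\in\cA(i)$ uniformly at random and put $\cH(i+1):=\cH(i)\cup\{T_{i+1}\}$, otherwise the process stops. Note that $\cA(i)$ is decreasing in~$i$: closing a pair or creating a potential girth violation is irreversible. Since $\cH(i)$ is, by construction, a partial Steiner triple system of girth larger than~$\ell$ with exactly~$i$ triples, \refT{thm:top} follows once we show that, for a suitable~$\beta_\ell>0$, \whp{} the process survives (has $\cA(i)\neq\emptyset$) until step $m:=\bigpar{1-n^{-\beta_\ell}}n^2/6$.

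It is convenient to reparametrise by $t:=i/n^2$. Each step closes exactly the three (previously open) pairs inside the added triple, so after step~$i$ the number of open pairs is \emph{deterministically} $\binom{n}{2}-3i=\bigpar{1-6t+o(1)}n^2/2$, and survival to step~$m$ amounts to running until only about $n^{2-\beta_\ell}$ open pairs remain. Heuristically, if the process stays `quasirandom' (closed pairs resembling $G(n,6t)$ and the triple set having density $\approx 6t/n$), then the number of available triples should track $A(i)\approx a(t)\,n^3$, where $a(t)=\tfrac16(1-6t)^3\rho_\ell(t)$: the factor $(1-6t)^3$ is the probability that all three pairs of a uniformly random triple are open, and $\rho_\ell(t)\in(0,1]$ is the probability that adding it creates no girth violation. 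The latter is governed by the \emph{dangerous configurations} — those obtained from a minimal girth-$g$ violator ($4\le g\le\ell$) by deleting one triple, which have $g$ vertices and $g-3$ triples — and one computes $\rho_\ell(t)=\exp\bigpar{-\lambda_\ell(t)}$, with $\lambda_\ell$ an explicit polynomial in $6t$ (a finite sum, over $g$ and over the relevant configuration types, of the expected number of ways a fixed triple completes such a configuration). In particular $\rho_\ell$ stays bounded away from~$0$ on $[0,1/6]$, so $A(i)\approx a(t)n^3\gg1$ on the whole range $i<m$, consistent with survival.

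To make this rigorous one runs the differential equation method with a finite, $\ell$-dependent family of random variables $\bigpar{X_J(i)}_J$, each $J$ a rooted configuration with $v_J$ vertices and $e_J$ triples: besides $A(i)$ one tracks suitable \emph{extension variables} (counting, for prescribed small roots, the number of ways to extend to configurations of open pairs and existing triples, so that the one-step conditional expectations close up), together with the counts of the dangerous configurations and of their `almost-complete' variants. One shows by induction on~$i$ that every $X_J(i)$ stays inside a narrow \emph{self-correcting} window around its trajectory $x_J(t)\,n^{v_J-e_J}$: the conditional change $\E[X_J(i+1)-X_J(i)\mid\cF_i]$ matches the governing ODE up to an error that is small because the remaining resource $1-6t$ is not yet too small, while the deviation $X_J(i)-x_J(t)n^{v_J-e_J}$ is controlled by Freedman-type martingale inequalities; a union bound over the $O(1)$ variables and $O(n^2)$ steps then closes the induction, and keeping $A(i)=(1+o(1))a(t)n^3$ throughout $i<m$ yields survival. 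A crucial structural input is that $\cH(i)$ \emph{itself} has girth larger than~$\ell$: this forces every set of $k$ triples spanning at most~$\ell$ vertices to span at least $k+3$ vertices, which is exactly what keeps the dangerous-configuration counts (and hence the blocking effect on $\cA(i)$) at the right polynomial order.

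I expect the main obstacle to be pushing the analysis all the way to $t=1/6-n^{-\beta_\ell}$ rather than merely to $t=1/6-\eps$ for fixed~$\eps$: near the end the process is degenerating, with $a(t)\downto0$ like $(1-6t)^3$, so the number of available moves is shrinking and naive tracking errors blow up. One therefore needs the error windows to be genuinely self-correcting — for instance scaled by a fixed power of the surviving quantity $1-6t$ — so that the induction survives into this regime; it is this refinement that delivers the sharp $\bigpar{1-n^{-\beta_\ell}}n^2/6$ of \refT{thm:top} in place of $(1-o(1))n^2/6$. A secondary, more routine, difficulty is that a triple of $\cA(i)$ may be blocked for many reasons at once (one per potential girth violation), so extracting the clean factor $\rho_\ell(t)=\exp(-\lambda_\ell(t))$ requires a Bonferroni/inclusion–exclusion argument, which is valid because the relevant configuration counts are small and their interaction terms smaller still.
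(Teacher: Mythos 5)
Your proposal follows essentially the same route as the paper: analyze the constrained process by the differential equation method, tracking the available-triple count together with codegree/extension variables for the forbidden configurations, with the key observation that the girth-blocking factor $\exp(-\lambda_\ell(t))$ (the paper's $q(t)$) stays bounded away from $0$ on $[0,1/6]$, and with Freedman-type supermartingale estimates whose error windows carry a fixed negative power of $1-6t$ to survive until $t=1/6-n^{-\beta_\ell}$. The only cosmetic difference is that the paper's windows are not self-correcting (it lists self-correction as a possible future refinement); it simply lets the error functions grow like $p^{-A}n^{\alpha}$, which is what your "scaled by a fixed power of $1-6t$" amounts to.
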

\noindent
We prove Theorem~\ref{thm:top} by showing that the following natural (see~\cite{KKLS}) constrained random process is very likely to produce the
desired object for fixed~$ \ell \ge 4$ (see \refT{thm:main}). 
Beginning with the empty $3$-uniform hypergraph~$\cH_0$ on $n$~vertices 
we sequentially set~$ \cH_{i+1} := \cH_i + e_{i+1}$, where the added triple~$e_{i+1}$ is chosen uniformly 
at random from the collection of triples~$xyz\not\in \cH_i$ with the property that the girth of~$\cH_i + xyz$ remains larger than~$\ell$ 
(i.e., that~$\cH_i + xyz$ contains no set of~$4 \le a \le \ell$ vertices that spans at least~$a-2$ triples). 
This process terminates with a maximal partial Steiner triple system with girth larger than~$\ell$.

Our differential equation method~\cite{nick2,r3t,BK,Wa3,FGM} based analysis of this random process 
is motivated by a pseudo-random heuristic for divining
the trajectories that govern the evolution of various key parameters (see~\refS{sec:pseudo-random}).  
Such heuristics play a central role
in our understanding of several other constrained random processes that produce interesting
combinatorial objects (such as the triangle-free process~\cite{r3t,Wz3,BK,FGM,GW}, 
the triangle removal process~\cite{BFL0,BFL}, 
and the $H$-free process~\cite{BK2,Wa1,Wa3,Wa2,P3}). 
A surprising consequence of our proof is that the general case only introduces minor modifications 
of the trajectories compared to the $\ell=4$ case (see~\refR{rem:main}), i.e., 
adding the arbitrary high-girth constraint does not affect the evolution~significantly.

Let us briefly mention that the notion of girth introduced above is the sparsest for which Questions~\ref{quest:proper}--\ref{quest:approx} are feasible.
Firstly, is is easy to check that any $n$-vertex Steiner triple system contains, for all~$3 \le g \le n$, a set of~$g$ vertices that spans at least~$g-3$ triples. 
Secondly, a celebrated result of Rusza and Szemer{\'e}di~\cite{RZ} states that any $n$-vertex $3$-uniform hypergraph in which no~$6$~vertices span~$3$~or~more triples contains at most~$o(n^2)$ triples. 
Thirdly, 
Lefmann, Phelps, and R{\"o}dl \cite{LPR} showed that any $n$-vertex Steiner triple system contains, for some~$g = O(\log n/\log \log n)$, a set of~$g$ vertices that spans at least~$g-2$~triples.

The remainder of this paper is organized as follows.  In the next section we
formally introduce the high-girth triple-process, and use a pseudo-random heuristic
to predict evolution trajectories for the key parameters of the process.  This
leads to the statement of our main technical result, \refT{thm:main}, which
we then prove in \refS{sec:proof}. 
The final \refS{sec:conclusion} contains some brief concluding remarks and conjectures.

\section{The high-girth triple-process}\label{sec:high-girth}
Henceforth fixing $\ell \ge 4$, our notational conventions are as follows.  
Let $\fF^+=\fF^+_{\ell}$ denote the collection of all $3$-uniform hypergraphs~$F$ with~$4 \le v_F \le \ell$ vertices and~$e_F=v_F-2$ triples, 
with the property that~$F$ contains no subhypergraph $J \subsetneq F$ with~$v_J \ge 4$ and~$e_J=v_J-2$. 
Let $\fF=\fF_{\ell}$ be the restriction of~$\fF^+$ to graphs~$F$ with~$v_F \ge 6$ vertices. 
\begin{remark}\label{rem:cF}
It is an easy exercise to check that any $3$-uniform hypergaph~$F$ with~$v_F =5$ vertices and $e_F=v_F-2$ triples
contains a subhypergraph $J \subseteq F$ with~$v_J=4$ vertices and~$e_J =2$ triples. 
\end{remark}
\noindent 
The \emph{high-girth triple-process} can now be defined as follows.  We begin with $ \cH_0$ which is the empty
$3$-uniform  hypergraph on $n$ vertices.  The triple $xyz$ is {\em available} at step~$i$ if~$ xyz $ intersects every
previously chosen triple in at most one vertex and~$\cH_i + xyz$ does not contain any hypergraph in the 
collection~$\fF$.  The triple~$e_{i+1}$ is chosen uniformly at random from the collection of triples available
at step~$i$, and it is added to the hypergraph to form $\cH_{i+1} := \cH_i + e_{i+1}$.  
The process terminates at a hypergraph~$\cH_m$ that has no available triples.

The reader will note that it would be equivalent to declare a triple $xyz\not\in \cH_i$ available if $\cH_i + xyz$ does not contain any hypergraph in the 
collection~$\fF^+$.  We treat the obstruction given by two triples that intersect in two
vertices differently because of the important role it plays in the process.  Indeed, we will see that 
this 4-vertex `diamond' obstruction is the main driver of the evolution of the process (see \refR{rem:main}).

\enlargethispage{\baselineskip} 

For~$\ell=4$ this forbidden diamond subhypergraph 
is the only obstruction, and the resulting process is equivalent to the so-called random \emph{triangle removal process}, 
which generates a sequence $ \binom{[n]}{2} = E_0 \supset E_1 \supset \dots $ of subgraphs of the complete $n$-vertex graph. 
Given~$E_i$, this 
process chooses~$xyz$ uniformly at random from the collection of all
triangles in the graph~$G_i:=([n], E_i)$, and the triangle~$xyz$ is removed from the edge-set to form~$E_{i+1}:=E_i \setminus \{xy,xz,yz\}$.  Note that the $3$-uniform 
hypergraph $ \cH_i$ given by the set of triples chosen in the first~$i$~steps of random triangle removal is equal to the hypergraph
produced by the high-girth triple-process with~$\ell=4$.  
We shall use this analogy in our study of the high-girth triple-process with arbitrary~$\ell \ge 4$.

\subsection{Key variables}\label{sec:variables}
Our main goal is to understand the evolution of the number of available triples, i.e., the size of  
\begin{equation}\label{def:Qi}
Q(i) :=  \biggset{ xyz \in \binom{[n]}{3}: \ \text{$xyz$ is available at step~$i$}} .
\end{equation}
Note that at the beginning of the process~$Q(0) = \binom{[n]}{3}$ is complete, and that at termination of the process~$Q(m)$ is empty.  
In order to track~$|Q(i)|$ we need to handle the step-wise impact of the various structural obstructions. 
To this end it will be convenient to study the associated graph~$G(i)$ with vertex-set~$[n]$ and edge-set
\begin{equation}\label{def:Ei}
E(i) := \binom{[n]}{2} \setminus \bigcup_{ xyz \in \cH_i} \{ xy,xz, yz \} . 
\end{equation}
In words, $G(i)$ is the graph given by the pairs that do not appear in any triple of~$\cH_i$. 
Note that any triple~$xyz \in Q(i)$ satisfies~$\set{xy,zy,xz} \subseteq E(i)$ (but the converse of this statement does not hold for~$\ell \ge 5$). 
Hence, when~$xyz \in Q(i)$ is added to~$\cH_i$, triples~$abc \in Q(i)$ can become unavailable in~$\cH_{i+1}= \cH_i + xzy$ because some edges in~$\set{ab,ac,bc} \cap \set{xy,xz, yz} \subseteq E(i)$ are no longer in~$E(i+1)$.  
To account for such `removed' triples, for every edge~$uv \in E(i)$ of~$G(i)$ we thus introduce 
\begin{equation*}
Y_{uv}(i) :=  \bigset{ z \in [n] \setminus \set{u,v}: \ uvz \in Q(i) } . 
\end{equation*}
In words, $|Y_{uv}(i)|$ is the `available' codegree of~$uv \in E(i)$. 
Note further that when~$xyz \in Q(i)$ is added to~$\cH_i$, triples~$abc \in Q(i)$ can also become unavailable in~$\cH_{i+1}= \cH_i + xzy$ because~$\cH_{i}+\set{xyz,abc}$ contains a copy of some forbidden hypergraph~$F \in \fF$.  
To handle such `closed' triples we need to account for all possible `routes' to copies of~$F \in \fF$ in~$(\cH_i)_{i \ge 0}$: 
for all $F \in \fF$, all triples~$uvw \in Q(i)$ and~$0 \le k \le e_F-2$ we thus introduce 
\begin{align*}
\bF_F &:= \biggset{F' \subseteq \binom{[n]}{3} : \ F' \cong F}, \\
W_{uvw, F, k}(i) &:=  \bigset{F' \in \bF_{F}: \  uvw \in F' \cap Q(i), \ |F' \cap Q(i)|=e_F-k, \text{ and } |F' \cap \cH_i|=k}. 
\end{align*}
In words, $\bF_F$ is the set of all copies of~$F$ in the complete $n$-vertex $3$-uniform hypergraph, 
and~$W_{uvw, F, k}(i)$ is the set of all `extensions' of~$uvw \in Q(i)$ to copies of~$F$ which have~$e_F-k$ available triples and~$k$ triples in~$\cH_i$.

\subsection{Pseudo-random intuition: Trajectory equations}\label{sec:pseudo-random}
In this subsection we introduce our pseudo-random intuition and use it
to derive the trajectories $|Q(i)| \approx \hq(t)$, $|Y_{uv}(i)| \approx \hy(t)$, and $|W_{uvw, F, k}(i)| \approx \hw_{F, k}(t)$ 
that we expect the random variables to follow.  The trajectories have a continuous time variable $t$ that we relate to the
discrete steps in the process by setting
\begin{equation}
t = t(i,n) := i/n^2. 
\end{equation}
Note that we would like to follow the evolution of the process until~$t = 1/6 -o(1)$; in particular, we would
like to show that the process does not terminate before that point in time. 
In view of~$|\cH_i|=i \approx \binom{n}{3} \cdot 6t/n$, our 
\emph{pseudo-random ansatz} is that the triples of~$\cH_i$ are approximately independent with
\[ \Pr(uvw \in \cH_i) \approx 6t/n =: \pi(t,n) = \pi, \]
where independence only holds with respect to statistics of~$\cH_i$ that do not involve obstructions from~$\fF^+$.

As a first application of this heuristic, we consider the probability that a pair~$xy$ is in the
edge-set~$E(i)$ defined in~\eqref{def:Ei}. 
 Note that any two triples in the set $\set{ xyz: z \in [n] \setminus \set{x,y}}$ would form a `diamond' 
obstruction given by two triples that intersect in two vertices.  
Hence the events that these triples appear in~$ \cH_i$ are
pairwise disjoint (by construction of the high-girth process). 
In view of~\eqref{def:Ei}, our pseudo-random ansatz thus suggests 
\[ \Pr\bigpar{ xy \in E(i)} = \Pr\Bigpar{\bigcap_{z \in [n] \setminus \set{x,y}} \bigset{xyz \not\in \cH_i}} = 1 - \sum_{z \in [n] \setminus \set{x,y}} \Pr(xyz \in \cH_i)
\approx 1 - (n-2) \pi \approx 1 -6t.\]
Analogous to the triangle removal process analysis~\cite{BFL0,BFL} we therefore define
\begin{equation}
 p = p(t) := 1-6t ,
\end{equation}
and expect the edges of $G(i)=([n], E(i))$ to appear approximately independently with probability~$p$.

We now derive the trajectory for~$|Q(i)|$. 
Since~$uvw \in Q(i)$ if and only if~$\set{uv,uw,vw} \subseteq E(i)$ and~$\cH_i+uvw$ contains no~$F' \in \bigcup_{F \in \fF} \bF_F$ containing~$uvw$, 
our pseudo-random ansatz loosely suggests 
\begin{equation*}
\E |Q(i)| 
\approx \sum_{uvw \in \binom{[n]}{3}} \biggsqpar{p^3 \cdot \Pr\Bigpar{\bigcap_{F \in \fF} \bigset{\text{$\cH_i+uvw$ contains no~$F' \in \bF_F$ containing~$uvw$}}}} .
\end{equation*}
Note that~$|\bF_F| = n (n-1) \cdots (n-v_F+1)/|{\rm Aut}(F)| = n^{v_F}/|{\rm Aut}(F)| + O(n^{v_F-1})$, where ${\rm Aut}(F) $ is the automorphism group of~$F$. 
Since~$v_F-3=e_F-1$ for~$F \in \fF$, it follows (using symmetry and a double-counting argument) that the total number of~$F' \in \bF_F$  
containing some fixed triple~$uvw$ is 
\begin{equation}\label{eq:NuvwF} 
N_{uvw,F} := |\set{ F' \in \bF_F: \ uvw \in F'}| = \frac{|\bF_F| e_F}{\binom{n}{3}} = 
\frac{ 6 e_F}{ | {\rm Aut}(F) | }n^{e_F-1}   + O(n^{e_F-2}). 
\end{equation}
Note that if~$ uvw \not\in \cH_i$ and $ F_1, \dots, F_j \in \bigcup_{F \in \fF} \bF_F$ are hypergraphs that all contain the triple~$uvw$ and satisfy~$ V(F_r) \cap V(F_s) = uvw$ for all~$ r \neq s$, 
then~$\bigcup_{1 \le r \le j} F_r \setminus \{uvw\}$ contains no obstruction in~$\fF^+$ (exploiting minimality of these obstructions). 
It follows that we can
combine our pseudo-random ansatz with the well-known Poisson paradigm (which suggests that the appearance of the different obstructions~$F' \in \bigcup_{F \in \fF} \bF_F$ are approximately independent).
We thus anticipate 
\begin{equation*}
\E |Q(i)|  \approx \binom{n}{3} \cdot p^3 \cdot \prod_{F \in \fF}   \bigpar{1 -  \pi^{e_F-1}}^{ \frac{6 e_F }{ | {\rm Aut}(F) | }n^{e_F-1} } \approx \binom{n}{3} \cdot p^3 \cdot q 
\end{equation*}
and arrive at the idealized trajectory~$|Q(i)| \approx  \binom{n}{3} \cdot p^3 \cdot q \approx p^3 q n^3/6$, 
where 
\begin{equation}\label{def:q}
q(t) := \exp\biggcpar{-\sum_{ F \in \fF} \frac{6 e_F }{ | {\rm Aut}(F) | } (6t)^{e_F-1} } .
\end{equation}
\begin{remark}\label{rem:FL}
It is easy to see that~$|\fF^+_\ell| \le \sum_{4 \le r \le \ell}\binom{\binom{r}{3}}{r-2} \le \ell^{2\ell}$, and~$q(t) \ge \exp(-\ell^{2\ell})$ for~$t \in [0,1/6]$. 
\end{remark}
\begin{remark}\label{rem:main}
It follows from Remark~\ref{rem:FL} that the function $q(t)$ does not vanish as $t$ approaches $1/6$ for any fixed value of $\ell$.  Thus, if our
pseudo-random intuition is correct, the obstructions on more than 4 vertices introduce a negligible
alteration of the trajectory that governs the evolution of random triangle removal.  Indeed, as~$ t \to 1/6$
the random triangle removal edge probability~$p(t) $ goes to zero while the triple availability probability~$ q(t)$ does not. 
Therefore 
the larger 
obstructions do not cause the process to
terminate before almost all pairs are covered by the triples selected by the process.  This is a key
insight in this work. 
\end{remark}

Next, by similar pseudo-random considerations as for~$|Q(i)| \approx \binom{n}{3} \cdot p^3 \cdot q$ above, 
we also expect that the available codegree of~$uv \in E(i)$ satisfies~$|Y_{uv}(i)| \approx (n-2) \cdot p^2 \cdot q$,   
and that the extension variables of~$uvw \in Q(i)$ satisfy~$|W_{uvw, F, k}(i)| \approx  N_{uvw,F} \binom{e_F-1}{k} \cdot \pi^{k} \cdot (p^3q)^{e_F-1-k}$ for~$0 \le k \le e_F-2$.  
In view of~\eqref{eq:NuvwF} and~$\pi = 6t/n$, it follows that the idealized trajectories of~$|Q|$, $|Y_{uv}|$, and~$|W_{uvw, F, k}|$ ought to be 
\begin{align}
\label{def:hq}
\hq = \hq(t) & :=  p^3 q n^3/6,  \\
\label{def:hy}
\hy = \hy(t)  & :=   p^2 qn, \\
\label{def:hw}
\hw_{F, k} = \hw_{F, k}(t) & : =    \frac{ 6 e_F}{ | {\rm Aut}(F) | }  \binom{e_F-1}{k} (6t)^k (p^3qn)^{e_F-1-k} ,
\end{align}
where here and elsewhere we often suppress the dependence on~$t$ or~$n$ in the notation (to avoid clutter).


\subsection{Main technical result: Dynamic concentration}
In this subsection we state our main technical result for the high-girth triple-process, which implies~\refT{thm:top} and verifies our pseudo-random intuition. 
In particular, it shows that the random variables indeed closely follow the heuristic trajectories 
$|Q(i)| \approx \hq(t)$, $|Y_{uv}(i)| \approx \hy(t)$, and $|W_{uvw, F, k}(i)| \approx \hat{w}_{F, k}(t)$ from~\eqref{def:hq}--\eqref{def:hw}. 
\begin{theorem}[Main technical result]\label{thm:main}
For every $\ell \ge 4$ and $\tau > 0$ there exist constants~$\alpha,\beta \in (0,1)$ and $A,n_0>0$ (where $\alpha,\beta,A$ depend only on~$\ell$) such that  
for $n \ge n_0$,  with probability at least~$1 - n^{-\tau} > 0$, we have $|Q(i)|>0$ and 
\begin{align}
\label{eq:Qi}
|Q(i)| & = \hq(t) \pm  p^{-A}n^{\alpha} (p n^2),  & & \\
\label{eq:Yuvi}
|Y_{uv}(i)| &= \hy(t) \pm  p^{-A}n^{\alpha}    &&\text{for all $uv \in E(i)$}, \\
\label{eq:WuvwFk}
|W_{uvw,F,k}(i)| &= \hw_{F,k}(t) \pm p^{-A}n^{\alpha} ( p^2 n)^{e_F-(2+k)}  & & \text{for all $uvw \in Q(i)$, $F \in \fF$, $0 \le k \le e_F-2$}, 
\end{align}%
for all $0 \le i \le m_0:= \bigceil{\bigpar{1-n^{-\beta}} n^2/6}$. 
\end{theorem}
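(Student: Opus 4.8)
The plan is to run the differential equation method in the standard "self-correcting" / dynamic-concentration format used for the triangle-free and $H$-free processes. I would introduce the normalized error variables
\[
\cQ(i) := \frac{|Q(i)|-\hq(t)}{p^{-A}n^{\alpha}(pn^2)}, \qquad
\cY_{uv}(i) := \frac{|Y_{uv}(i)|-\hy(t)}{p^{-A}n^{\alpha}}, \qquad
\cW_{uvw,F,k}(i) := \frac{|W_{uvw,F,k}(i)|-\hw_{F,k}(t)}{p^{-A}n^{\alpha}(p^2n)^{e_F-(2+k)}},
\]
and prove by a union bound over a supermartingale/submartingale argument that each of these stays in $[-1,1]$ for all $i \le m_0$, on a suitable "good" event. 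First I would set up a stopping time $T$ that is the first step at which any of the bounds~\eqref{eq:Qi}--\eqref{eq:WuvwFk} fails (or some auxiliary boundedness hypothesis fails), and work on the event $\{i < T\}$ so that all variables are under control up to the current step; the goal is then to show $\Pr(T \le m_0) \le n^{-\tau}$.

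The core of the argument is the one-step analysis. For each variable $X \in \{|Q(i)|, |Y_{uv}(i)|, |W_{uvw,F,k}(i)|\}$ I would compute the conditional expected one-step change $\E[\Delta X(i) \mid \cF_i]$ on $\{i<T\}$, organized by which structural event causes a triple to leave $Q$: (i) a triple $abc$ is "removed" because the new triple $e_{i+1}=xyz$ kills one of the pairs $ab,ac,bc$ — these contributions are counted by the $Y_{uv}$ variables; (ii) a triple $abc$ is "closed" because $\cH_i + e_{i+1} + abc$ completes a forbidden $F \in \fF$ — counted by the $W_{uvw,F,k}$ variables. Matching these expected changes against $\frac{d}{dt}$ of the trajectories~\eqref{def:hq}--\eqref{def:hw} (using $p'(t)=-6$ and the ODE for $q$) yields that the trajectories are (approximately) fixed points of the expected dynamics, with an error that is lower order provided the current errors are within the allowed envelope; crucially, the linearization of the dynamics around the trajectory should be \emph{contracting} (self-correcting) so that the variable is pushed back toward $\hq,\hy,\hw$ — this is what lets one beat the naive $\sqrt{\text{steps}}$ heuristic and get the polynomial savings $p^{-A}n^\alpha$. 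I would then define super/submartingales of the form $\cQ^{\pm}(i)$ obtained by adding a correction term that absorbs the deterministic drift and a "cushion" term decaying like a power of $p$; apply the Hoeffding--Azuma / Freedman inequality using the one-step boundedness $|\Delta X| \le$ (polynomial in $n$, $p$) that follows from the codegree bounds~\eqref{eq:Yuvi} and extension bounds~\eqref{eq:WuvwFk} on $\{i<T\}$; and union-bound over $i \le m_0$, over all $uv$, and over all $uvw,F,k$ (at most $n^3 \cdot \ell^{2\ell} \cdot \ell$ of them, by Remark~\ref{rem:FL}). Choosing $\alpha$ small, $A$ large, and $\beta$ small (all in terms of $\ell$ only) makes every error term and every failure probability fit, giving $\Pr(T\le m_0)\le n^{-\tau}$; and since $\hq(t) = p^3qn^3/6 \gg p^{-A}n^\alpha(pn^2)$ for $t\le (1-n^{-\beta})/6$ by Remark~\ref{rem:FL} (as $q$ is bounded below), $|Q(i)|>0$ throughout, so the process does not terminate before $m_0$.

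The main obstacle I expect is controlling the \emph{closed}-triple term, i.e.\ the $W_{uvw,F,k}$ hierarchy, uniformly over all $F \in \fF$ and all $0 \le k \le e_F-2$: this is a nested family of extension variables (a copy of $F$ with $k$ triples already in $\cH_i$ needs a chain of further random choices to be completed), so their one-step changes involve variables one level up in the hierarchy, and one must show the whole system stays concentrated simultaneously. Getting the error envelopes to propagate correctly through this hierarchy — in particular verifying that the $(p^2n)^{e_F-(2+k)}$ scaling in~\eqref{eq:WuvwFk} is exactly what makes the induction close, and that the contributions of partially-completed forbidden configurations to $\E[\Delta|Q|]$ match the $q'(t)/q(t)$ term in the trajectory — is the delicate accounting. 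A secondary technical point, also nontrivial, is handling the $p^{-A}$ blow-up as $t \to 1/6$: since $p\to 0$ but $q$ stays bounded away from $0$ (Remark~\ref{rem:main}), the relevant quantities degrade only polynomially, and one must check that the martingale bounds still beat the union bound right up to $m_0=\bigceil{(1-n^{-\beta})n^2/6}$, which is exactly why $\beta$ must be chosen small depending on $A$ and $\alpha$.
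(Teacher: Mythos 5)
Your overall skeleton (stopping time, comparing the drift with the trajectory derivatives, Freedman's inequality, union bound over the $Y$/$W$ hierarchy, and deducing $|Q(i)|>0$ from $\hq \gg e_Q$) matches the paper's, but there is a genuine gap at the boundedness/overcounting step. You assert that the one-step bounds $|\Delta X|$ ``follow from the codegree bounds~\eqref{eq:Yuvi} and extension bounds~\eqref{eq:WuvwFk} on $\{i<T\}$''. They do not. The worst-case change of $|Y_{uv}|$ when $e_{i+1}=f$ is chosen is governed by the number of copies of some $F\in\fF$ that contain both the pair $uv$ and the triple $f$ and have all remaining triples already in $\cH_i$; for $|W_{uvw,F,k}|$ one similarly needs counts of partial copies rooted at \emph{two} prescribed triples, and of pairs of overlapping partial copies. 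These doubly-rooted counts are not among the tracked variables (which are rooted at a single available triple), and the tracked bounds only give estimates of the order $\hw_{F,e_F-2}=\Theta(n)$, far too large. The same quantities enter the drift computation itself: a single chosen triple can close $uvz$ through several forbidden configurations simultaneously, or destroy two available triples of one extension at once, so $\E[\Delta X\mid\cF_i]$ carries overcounting corrections that must be shown negligible before the trajectory derivative can be identified. The paper handles all of this with a separate a priori extension estimate (\refT{thm:ext}, proved by a moment/union-bound argument valid throughout the process) combined with the minimality of the obstructions in $\fF^+$ (\refL{lem:minimality}), yielding the fidelity and boundedness estimates of Claims~\ref{cl:fidelity} and~\ref{cl:bounded}; this combinatorial input is where much of the real work lies and is absent from your outline. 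Without it, neither the constant $C$ in your Freedman application nor the error terms in the supermartingale verification are justified.

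A secondary point: you attribute the polynomial error $p^{-A}n^{\alpha}$ to a self-correcting (contracting) linearization around the trajectory. The paper does not use self-correction at all --- it is mentioned in \refS{sec:conclusion} only as a possible future refinement. Instead, the supermartingale property is obtained by letting the error envelope grow like $p^{-A}$, so that $e_X'=\Theta(A)\,e_X/p$ dominates the $O(e_X/p)$ drift errors once $A$ is chosen large, and the concentration comes from Freedman's inequality via the gap between the expected one-step changes ($O(n^{-1})$, resp.\ $O(n^{e_F-(3+k)})$) and the worst-case ones. Proving genuine contraction for the coupled $Y$/$W$ hierarchy would be substantially harder and is unnecessary for the stated error bounds, so as written your proposal leans on an unproved (and avoidable) mechanism at that point.
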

\begin{remark}
The proof shows that~\eqref{eq:Qi}--\eqref{eq:Yuvi} imply~$|Q(i)| \sim \hq(t)$ and~$|Y_{uv}(i)| \sim \hy(t)$ for~$0 \le i \le m_0$;~see~\eqref{eq:heQY}.  
\end{remark}
\noindent
We reiterate that the key observation is that the availability function~$q(t)$ 
does not tend to~$0$ as the high-girth process comes to an end near~$t =1/6$ (see \refR{rem:main}). 
This shows that the influence of the obstructions~$F \in \fF$ 
does not play a significant role in the evolution of the process 
(as they only alter the various trajectories by constant factors), 
confirming the Poisson paradigm developed in Section~\ref{sec:pseudo-random}; see also \refConj{conj:final}.  

\section{Analyzing the high-girth process: Proof of \refT{thm:main}}\label{sec:proof}
In this section we prove \refT{thm:main} by showing~$\Pr(\neg \cG_{m_0}) = o(n^{-\tau})$, 
where~$\cG_j$ is the event that~$|Q(i)|\ge \hq(t)/2$ and the estimates~\eqref{eq:Qi}--\eqref{eq:WuvwFk} hold for all $0 \le i \le j$. 
Deferring the choice of~$A \ge 2\ell$, for concreteness we set 
\begin{equation}\label{def:alpha:beta}
\alpha := \frac{3}{4} \quad \text{ and } \quad \beta := \min\biggcpar{\frac{1-\alpha}{2(A+2)}, \; \frac{\alpha}{80\ell}} . 
\end{equation}
For brevity we also introduce the error functions 
\begin{align*}
e_Y(t) := p^{-A}n^{\alpha}, \qquad e_Q(t):= e_Y \cdot (p n^2), \quad \text{ and } \quad e_{W_{F,k}} &:= e_Y \cdot ( p^2n )^{e_F-(2+k)}, 
\end{align*} 
so that we can write each of~\eqref{eq:Qi}--\eqref{eq:WuvwFk} in the form~$|X(i)|  = \hat{x}(t) \pm e_X(t)$. 
Furthermore, we shall 
always tacitly assume~$0 \le i \le m_0$, and that~$n \ge n_0(\ell,\tau,\alpha,\beta,A)$ is sufficiently large (whenever necessary). 
In particular, 
\begin{equation}
\label{eq:heQY}
\he_Y(t) :=e_Y(t)/\hy(t) = o(1)
 \quad \text{ and } \quad 
\he_Q(t) :=e_Q(t)/\hq(t) = o(1) ,
\end{equation}
since both functions are  of order $1/(p^{A+2}qn^{1-\alpha}) = O(n^{\beta(A+2)-(1-\alpha)}) = o(1)$ by choice of~$\beta$. 
Here and elsewhere we use the convention that all implicit constants may depend on~$\ell$ (but not on $\tau,\alpha,\beta,A,t$).

We first show that the bounds  for $|Q(i)|$ follow from the bound~\eqref{eq:Yuvi} for~$|Y_{uv}(i)|$. 
Indeed, using a double-counting argument (exploiting that~$xyz \in Q(i)$ satisfies~$\set{xy,zy,xz} \subseteq E(i)$) 
together with $|E(i)|=\binom{n}{2}-3i = [1-1/(np)] \cdot n^2p/2$ and~\eqref{eq:heQY}, 
it follows that $(\hy \pm e_Y)/(np) = O(\hy/(np)) = O(1) = o(e_Y)$ and 
\[
|Q(i)| = \frac{1}{3} \sum_{e \in E(i)} |Y_e(i)| = \frac{1}{3} \cdot \biggsqpar{\binom{n}{2}-3i} \cdot \bigpar{\hy \pm e_Y} \in \hq(t) \pm  e_Q .
\]
Using again~\eqref{eq:heQY}, now~$|Q(i)|\ge \hq(t)/2$ follows, with room to spare.

In the remainder of this section it thus suffices to establish the bounds~\eqref{eq:Yuvi}--\eqref{eq:WuvwFk} 
for the available codegree~$|Y_{uv}(i)|$ and the extension variables~$|W_{uvw,F,k}(i)|$. 
To this end, following the differential equation method approach to dynamic concentration, 
for each variable $X$ of the form~$Y_{uv}$ or~$W_{uvw,F,k}$ we introduce a pair of sequences of (auxiliary) random variables 
\begin{equation*}
X^\pm(i) := \pm\bigsqpar{|X(i)| - \hx(t)} - e_X(t) .
\end{equation*}
Note that the desired estimate~$|X(i)| = \hat{x}(t) \pm e_X(t)$ follows  
if the two estimates~$X^{\pm}(i) \le 0$ both hold. 
To show that $X^{\pm}(i) \le 0$ holds with sufficiently high probability, 
in Sections~\ref{sec:ECh}--\ref{sec:trend} we first establish that the sequences~$X^\pm(i)$ are supermartingales, 
and then provide bounds on the one-step changes $\Delta X^\pm(i) := X^{\pm}(i+1)-X^{\pm}(i)$ in \refS{sec:bounded} (see also \refS{sec:crude}). 
After noting that initially $X^{\pm}(0) \le -e_X(0)/2 = - n^{\Omega(1)}$ holds, 
in \refS{sec:martingale} we then use an Azuma--Hoeffding type inequality 
to show that~$X^{\pm}(i) \ge 0$ has extremely low probability 
(even in comparison with the polynomial number of such bad events).

\subsection{Expected one-step changes}\label{sec:ECh}
We begin by deriving expressions for the one-step expected changes 
of~$|Y_{uv}|$ and~$|W_{uvw,F,k}|$. 
For technical reasons we `freeze' these variables as soon as the 
relevant structural constraints 
from~\eqref{eq:Yuvi}--\eqref{eq:WuvwFk} are violated, i.e., 
we formally set~$|Y_{uv}(i+1)|=|Y_{uv}(i)|$ if $uv \not\in E(i+1)$, and~$|W_{uvw,F,k}(i+1)|=|W_{uvw,F,k}(i)|$ if $uvw \not\in Q(i+1)$. 
For brevity we write~$\D X(i) := |X(i+1)| - |X(i)|$, 
and henceforth tacitly assume~$0 \le i < m_0$.

For the changes $\D Y_{uv}(i) := |Y_{uv}(i+1)| - |Y_{uv}(i)|$ of the available codegree we assume that~$uv \in E(i)$ holds.  
In order to calculate the one-step expected change in this variable, we consider $ z \in Y_{uv}(i) $ and the event that $ uvz$ becomes unavailable in the
next step of the process. 
This occurs (recalling the discussion from \refS{sec:variables}) if the process chooses any triple associated with any of the following sets:
\[ Y_{uv}, Y_{uz}, Y_{vz} , \text{ or }  \bigcup_{F \in \fF} W_{uvz, F, e_F-2} . \] 
There are a couple of caveats that we need to bare in mind.  
If the process chooses some triple~$uvz'$ then we freeze the variable~$|Y_{uv}|$, and so we should not consider the influence of such triples. 
Furthermore, we would like to estimate the cardinality of
$ \bigcup_{F \in \fF} W_{uvz, F, e_F-2}  $ with the sum $  \sum_{ F \in \fF} \left|  W_{uvz, F, e_F-2} \right| $. 
However, this might not be correct since a triple~$h \neq uvz$ that is associated with the union could be
counted more than once by the sum, since~$\cH_i + \set{uvz,h}$ could contain multiple hypergraphs~$F$ in~$\fF$ 
(which includes the possibility of multiple copies of one of these hypergraphs).  
In order to deal with this possibility we introduce the {\em destruction fidelity} term 
\begin{equation}\label{eq:upsilon}
\Upsilon_{uvz}(i) := \bigabs{\bigcpar{h \in Q(i) \setminus \set{uvz} : \: \cH_i + \set{uvz,h} \text{ contains multiple }  K \in \fF \text{ containing~$\set{uvz,h}$}} }. 
\end{equation} 
So, noting that for distinct $e,e' \in \{uv,uz,vz\}$ there is at most one triple that is associated with both~$Y_e$ and~$Y_{e'}$, 
and no triples which are associated with both~$Y_e$ and~$W_{uvz, F, e_F-2}$ (as otherwise~$F \in \fF$ would contain a forbidden `diamond' subhypergraph~$\cD \subseteq F$ with~$v_\cD=4$ and~$e_\cD=2$), 
it follows that we can write 
\begin{equation}\label{eq:DYuv}
\E(\D Y_{uv}(i) \mid \cF_{i}) =  - \frac{1}{|Q|}  \sum_{z \in Y_{uv}} \Biggsqpar{ |Y_{uz}| + |Y_{vz}| + O(1) + \sum_{F \in \fF} |W_{uvz,F, e_F-2}| + O(\Upsilon_{uvz})},  
\end{equation} 
where~$\cF_i$ denotes, as usual, the natural filtration associated to our random hypergraph process after~$i$~steps. 
(We discuss a bound on $\Upsilon_{uvz}$ after considering the changes in the~$|W_{uvw,F,k}|$, see \refCl{cl:fidelity} below.)

For the one-step changes~$\D W_{uvw,F,k}(i) := |W_{uvw,F,k}(i+1)| - |W_{uvw,F,k}(i)|$ of the extension variables we assume that~$uvw \in Q(i)$ holds. 
There are two ways in which~$W_{uvw,F,k}$ changes: an extension~$F'$ currently in this collection can leave the collection if one of the available triples in~$F' \setminus \set{uvw}$ is chosen or becomes unavailable, 
and an extension~$F' \in W_{uvw,F,k-1} $ can move into~$W_{uvw,F,k}$ if one of the available triples in~$F' \setminus \set{uvw}$ is 
chosen by the process. 
Here a caveat is that we freeze the variable~$|W_{uvw,F,k}|$ when the process selects either~$uvw$ or a triple which makes~$uvw$ unavailable  
(which we anticipate to have negligible impact). 
Taking into account all these effects, it follows that 
\begin{equation}\label{eq:DWuvwFk}
\begin{split}
\E(\Delta W_{uvw,F,k}(i) \mid \cF_{i})  & = -   \frac{1}{|Q|}  \sum_{F' \in W_{uvw,F,k} } \Biggsqpar{  \sum_{\substack{f \in F' \cap Q:\\ f \neq uvw}}  \Biggpar{ \sum_{ xy \in \binom{f}{2}}  |Y_{xy}| +
O(1) + \sum_{ K \in \fF}  |W_{f,K,e_{K}-2}|  +O(\Upsilon_f)  }  +O(\Psi_{F'}) } \\
& \qquad 
+ \frac{\indic{k \ge 1}}{|Q|} \Biggsqpar{\sum_{F' \in W_{uvw,F,k-1} } \bigpar{e_F-k}  + O(\Lambda_{uvw,F,k-1})} ,
\end{split}
\end{equation}
where the three fidelity terms correspond to the following possibilities for overcounting: 
\begin{description}
\itemsep 0.125em \parskip 0em  \partopsep=0pt \parsep 0em 
\item[$\Upsilon_f$:] 
This term is identical to the destruction fidelity term introduced in~\eqref{eq:upsilon} above: 
it intuitively accounts for triples~$h \neq f$ whose selection 
would make the triple~$f \in Q(i)$ unavailable in more than one way. 
\item[$\Psi_{F'}$:] This is an additional \emph{destruction fidelity} term: it accounts for triples~$h \neq uvw$ whose selection 
would make two different available triples~$f,g \in F' \setminus\set{h}$ of the extension~$F'$ unavailable  
(it also takes freezing due to~$h \not\in F'$ into account, by allowing for~$uvw \in \set{f,g}$). 
\item[$\Lambda_{uvw,F,k-1}$:] 
This is the {\em creation fidelity} term: it accounts for available triples~$h \neq uvw$ in some extension~$F' \in W_{uvw,F,k-1}$, 
whose selection 
would make another available triple~$g \in F' \setminus \set{h}$ of~$F'$ unavailable  
(it also takes freezing due to~$h$ into account, by allowing for~$g=uvw$).  
\end{description}
These three fidelity terms are discussed in detail in \refS{sec:cl:fidelity}, where we prove the following bounds. 
\begin{claim}[Fidelity estimates]\label{cl:fidelity}
We have $\Pr(\neg \cE_i \cap \cG_i \text{ for some $0 \le i \le m_0$}) = o(n^{-\tau})$, 
where~$\cE_i$ denotes the event that, 
for all $f \in Q(i)$, $F \in \fF$, $0 \le k \le e_F-2$, and~$F' \in W_{f,F,k}(i)$, 
\begin{align}
\label{eq:up}
\Upsilon_f(i) & \le n^{\alpha}, \\
\label{eq:ps}
\Psi_{F'}(i) & \le n^{\alpha} , \\
\label{eq:lam}
\indic{k \ge 1}\Lambda_{f,F,k-1}(i) & \le n^{e_F-(1+k)+\alpha} . 
\end{align}
\end{claim}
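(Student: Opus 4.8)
The plan is to bound each fidelity term by a first/second-moment style counting argument, conditioned on the good event $\cG_i$ (so that all of $|Q(i)|$, $|Y_{uv}(i)|$, $|W_{uvw,F,k}(i)|$ are within their expected ranges). The key observation is that each of $\Upsilon_f$, $\Psi_{F'}$, $\Lambda_{f,F,k-1}$ counts configurations that simultaneously involve \emph{two} forbidden obstructions from $\fF$ (or two available triples of a single extension), and because of the minimality condition in the definition of $\fF^+$ such doubly-constrained configurations span substantially fewer than the `free' number of vertices. Concretely, for $\Upsilon_f$: if $h\in Q(i)\setminus\set{f}$ is such that $\cH_i+\set{f,h}$ contains two distinct copies $K_1,K_2\in\fF$ both using $\set{f,h}$, then $K_1\cup K_2$ spans at most $v_{K_1}+v_{K_2}-|V(K_1)\cap V(K_2)|$ vertices, and since $f,h\subseteq V(K_1)\cap V(K_2)$ we save enough to reduce the exponent. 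I would first fix $f=xyz\in Q(i)$ and, for each pair $(K_1,K_2)$ of isomorphism types and each way the copies can overlap, count the number of choices of $h$ such that all the triples of $(K_1\cup K_2)\setminus\set{f,h}$ already lie in $\cH_i$; using the fact that $\cH_i$ has only $i=O(n^2)$ triples and the structural restriction, this count is $O(n^{\text{something}<\alpha})$ in expectation, and one then has to upgrade this to a whp statement.

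\textbf{Step 1 (reduce to expectation bounds via a union bound).} I would set up $\cE_i$ as the intersection of the three events in~\eqref{eq:up}--\eqref{eq:lam} over all the (polynomially many) choices of $f,F,k,F'$, and aim to show $\Pr(\neg\cE_i\cap\cG_i)=o(n^{-\tau})$ by showing that for each individual configuration the relevant random variable exceeds its threshold with probability $o(n^{-\tau-C})$ for a large constant $C$ absorbing the number of choices. Since these variables are integer-valued and bounded, it suffices to bound the probability that, say, $\Upsilon_f(i)\ge n^\alpha$ while $\cG_i$ holds.

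\textbf{Step 2 (the core counting).} Here is the heart of the matter, and the step I expect to be the main obstacle. For a fixed configuration one wants a bound of the form: the expected number of `witnesses' (choices of $h$, together with an embedded pair of obstructions) is polynomially smaller than the threshold $n^\alpha$ (resp.\ $n^{e_F-(1+k)+\alpha}$ for $\Lambda$), so that a union bound or a simple martingale/Azuma argument over the history of the process gives the whp statement. The delicate point is that $h$ is required to be in $Q(i)$, i.e.\ \emph{available}, which is a property of the whole process history rather than just of $\cH_i$; here I would not use availability at all and simply bound $|Q(i)|\le\binom n3$, counting instead the number of $h\in\binom{[n]}{3}$ completing a doubly-obstructed configuration with all other triples in $\cH_i$. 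The number of such completions is controlled by the number of triples in $\cH_i$ (at most $m_0=O(n^2)$) raised to the number of `internal' triples of the union minus those forced, and the minimality condition in the definition of $\fF$ (no proper subhypergraph $J$ with $v_J\ge4$, $e_J=v_J-2$) is exactly what forces the union of two overlapping obstructions, with $\set{f,h}$ or a common triple shared, to have strictly fewer free vertices than the generic count — yielding the saving. For $\Psi_{F'}$ and $\Lambda$ one argues similarly, now counting triples $h$ that make two available triples $f,g$ of a single extension $F'$ unavailable, which again requires $h$ to participate (together with $\cH_i$) in two obstructions, one attached to $f$ and one to $g$; the $n^{e_F-(1+k)+\alpha}$ bound in~\eqref{eq:lam} reflects that $W_{f,F,k-1}$ itself has size $\approx n^{e_F-k}$ and each extension contributes $O(n^{\alpha-1})$-fraction of overcounting witnesses in expectation.

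\textbf{Step 3 (from expectation to whp, and assembling).} With the per-configuration expectation bounds in hand, I would either (a) apply Markov's inequality directly when the expectation is already $n^{\alpha-\Omega(1)}$ below the threshold, which suffices after the union bound provided the polynomial loss is absorbed — but Markov alone gives only $o(1)$, not $o(n^{-\tau})$, so more care is needed; or (b) observe that $\Upsilon_f(i)$, $\Psi_{F'}(i)$, $\Lambda_{f,F,k-1}(i)$ are themselves monotone-ish in the process and bounded by fixed polynomials in $n$, so one can expose the triples of $\cH_i$ one at a time and use an Azuma--Hoeffding (or Freedman/McDiarmid-type) inequality with small Lipschitz constants to get the required stretched-exponential tail, since the threshold $n^\alpha=n^{3/4}$ is polynomially larger than the expectation. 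I would choose route (b): first condition on $\cG_i$, then for each fixed configuration bound the one-step changes of the witness count as triples are added (each new triple can be the final triple of only $O(n^{\cdot})$ obstructions), showing the witness count is tightly concentrated around its small mean; a union bound over the $n^{O(\ell)}$ configurations and over $0\le i\le m_0$ then completes the proof. The main obstacle is getting the exponent bookkeeping in Step~2 exactly right so that the saving from minimality of $\fF$-obstructions beats the $n^\alpha$ threshold with room to spare for the union bound; everything else is routine martingale machinery of the kind already invoked elsewhere in the paper.
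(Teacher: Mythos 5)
Your reduction in Steps~1--2 is in the right spirit and broadly parallels what the paper does: the fidelity terms are rewritten as counts of configurations in which two obstructions (or two available triples of one extension) are simultaneously involved, availability of the witness triple is discarded, and the minimality of the obstructions in $\fF^+$ is what produces the exponent saving. But the probabilistic step~3 has a genuine gap. Your route~(b) assumes you can run Azuma/Freedman with ``small Lipschitz constants'', yet the one-step change of such a witness count when $e_{i+1}$ is added is itself an extension count (the number of partial double-obstruction configurations through $e_{i+1}$ with all remaining triples already in $\cH_i$), so bounding it requires exactly the kind of estimate you are trying to prove -- a circularity that forces an induction over configuration sizes which you have not set up. Moreover, even granting $O(1)$ one-step changes, Azuma over $m_0=\Theta(n^2)$ steps gives fluctuations of order $n$, which swamps the threshold $n^{\alpha}=n^{3/4}$; and you correctly note that first-moment Markov only gives $o(1)$, not the $o(n^{-\tau})$ needed after the polynomial union bound. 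The paper avoids all of this with a different, more elementary device (Theorem~\ref{thm:ext}): on $\cG_i$ every step has at least $n^{3-4\beta}$ available triples, whence $\Pr(\cG_i\text{ and }T\subseteq\cH_i)\le\pi^{|T|}$ with $\pi=n^{-1+4\beta}$ by a union bound over steps, and then a high moment $\E(\indic{\cG_i}N_{\rho,G,H}(i))^s$ with $s=s(\tau,\ell,\beta)$ large gives per-configuration failure probability $n^{-(3+2\ell+\tau)}$ via Markov -- note that the resulting bound necessarily carries a maximum over all intermediate roots $G\subseteq J\subseteq H$, which your ``expectation of witnesses'' framing does not capture but which is essential for the whp statement.

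A second, smaller shortfall: you defer the ``exponent bookkeeping'' of Step~2, but that is where much of the content lies, and minimality alone does not close it. In the paper this is Lemma~\ref{lem:degree1}, whose proof needs, beyond Lemma~\ref{lem:minimality}, a case analysis ruling out the configuration $v_A=3$, $B=K'$, which in turn uses a connectivity-type property of $K\in\fF$ (every partition of $V(K)\setminus W$, $|W|\le 3$, into two nonempty parts is crossed by a triple of $K$) and a separate treatment of the diamond versus $\fF$ cases, plus the extra observation needed for $\Lambda$ (the variant $\cW^+$ where only $f$ is rooted). So the proposal identifies the right combinatorial mechanism but is missing the key probabilistic input (the $\pi^{|T|}$ bound plus high moments, or an equivalent) and the nontrivial deterministic case analysis that turns ``minimality saves vertices'' into the precise inequality $(v_H-e_H)-(v_J-e_J)\le e_F-(2+k)+\cdots$ for every intermediate~$J$.
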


\subsection{Trend hypothesis (supermartingale condition)}\label{sec:trend}
With the expressions for~$\E(\Delta X(i) \mid \cF_i)$ in hand,  
we now estimate the expected one-step changes of~$\Delta X^\pm(i)= X^{\pm}(i+1)-X^{\pm}(i)$. 
Recalling~$t=i/n^2$, by applying Taylor's theorem with remainder we obtain 
\begin{equation}\label{eq:Xpmi}
\begin{split}
\Delta X^\pm(i) &= \pm \biggsqpar{\Bigsqpar{|X(i+1)| - |X(i)|} - \Bigsqpar{\hx\bigpar{t+1/n^2}-\hx(t)}} - \Bigsqpar{e_X\bigpar{t+1/n^2} - e_X(t)} \\
& = \pm \biggsqpar{\Delta X(i) - \frac{\hx'(t)}{n^2}} - \frac{e'_X(t)}{n^2}  + O\biggpar{\sup_{s \in [0,m_0/n^2]}\frac{|\hx''(s)|+|e''_X(s)|}{n^4}} .
\end{split}
\end{equation}
The crux will be that~$\E(\Delta X(i) \mid \cF_i) \approx \hx'(t)/n^2$ holds (when~$\cF_i$ satisfies some natural conditions), 
so that the error term~$e'_X(t)/n^2$ enforces the supermartingale condition~$\E(\Delta X^{\pm}(i)\mid \cF_i) \le 0$.

In the following estimates of $\E(\Delta Y^{\pm}_{uv}(i) \mid \cF_i)$ we assume that~$uv \in E(i)$ and~$\cG_i \cap \cE_i$ hold.
To avoid clutter we introduce the abbreviation 
\begin{align*}
\tq(t) & :=  \sum_{ F \in \fF} \frac{ 6^2 e_F (e_F-1) ( 6t)^{e_F-2}}{ |{\rm Aut}(F)| } ,
\end{align*}
which conveniently satisfies~$q'(t) = -q(t) \tq(t)$ and 
\begin{equation}\label{eq:tqest}
\sum_{ F \in \fF} \frac{ \hat{w}_{F,e_F-2}(t)}{ \hat{q}(t)} = \frac{ \tilde{q}(t)}{ n^2} .
\end{equation}
By~\eqref{eq:heQY} we have~$\he_Q = \Theta(\he_Y)= o(1)$. 
Recalling~\eqref{eq:DYuv}, 
using~$e_{W_{F,e_{F}-2}}= e_Y = \Omega(n^{\alpha})$ we obtain  
\begin{equation}\label{eq:DYuv:est}
\begin{split}
\E(\D Y_{uv}(i)\mid \cF_{i}) & =  - \frac{\bigpar{1+O(\he_Y)}\hy}{\bigpar{1+O(\he_Q)}\hq} \cdot \Biggsqpar{ \bigpar{1+O(\he_Y)}2\hy + \sum_{F \in \fF} \hw_{F,e_{F}-2} + O\Bigpar{\sum_{F \in \fF} e_{W_{F,e_{F}-2}} + n^{\alpha}}}\\
& = - \bigpar{1+O(\he_Y)} \Biggsqpar{ \frac{12pq}{n} + \frac{p^2q \tq}{n}} + \frac{O\bigpar{e_Y|\fF|}}{n^2p} . 
\end{split}
\end{equation}
Noting that $-[12pq/n +p^2q \tq /n] = \hy'/n^2$, using~$\he_Y = e_Y/\hy$ and \refR{rem:FL} we arrive at 
\begin{equation}\label{eq:DYuv:est:2}
\begin{split}
\E(\D Y_{uv}(i)\mid \cF_{i}) & = \frac{\hy'}{n^2} + \frac{O\bigpar{e_Y\bigpar{1+p\tq+|\fF|}}}{n^2p} = \frac{\hy'}{n^2} + \frac{O(e_Y)}{n^2p} .
\end{split}
\end{equation}
Inspecting the generic estimate~\eqref{eq:Xpmi}, in order to establish the desired supermartingale condition $\E(\Delta Y_{uv}^{\pm}(i) \mid \cF_i) \le 0$ 
we thus require that the error function~$e_Y$ satisfies the \emph{variation equation}
\begin{equation}\label{eq:Y:cond}
e_Y'(t) \  \gg  \  \max\Biggcpar{\frac{ e_Y(t)}{p} , \; \sup_{s \in [0,m_0/n^2]}\frac{|\hy''(s)|+|e''_Y(s)|}{n^2} },  
\end{equation}
where we write $ f \gg g $ to denote that the ratio $f/g$ is sufficiently large.
(We shall verify~\eqref{eq:Y:cond} after deriving the corresponding variation equation for the~$e_{W_{F,k}}$ error functions.)

In the following estimates of $\E(\Delta W^{\pm}_{uvw,F,k}(i) \mid \cF_i)$ we assume that~$uvw \in Q(i)$ and~$\cG_i \cap \cE_i$ hold.  
Recalling that $\sum_{K \in \fF} e_{W_{K,e_{K}-2}}=O(e_Y)$, 
in view of~\eqref{eq:DWuvwFk} and $\he_Q = \Theta(\he_Y)$ 
we obtain 
\begin{equation*}
\begin{split}
\E(\D W_{uvw,F,k}(i) \mid \cF_{i}) & =  - \frac{\bigpar{\hw_{F,k}+O(e_{W_{F,k}})}}{\bigpar{1+O(\he_Y)}\hq} \cdot \Biggsqpar{ (e_F-1-k)\biggpar{3\hy + \sum_{K \in \fF} \hw_{K,e_{K}-2}} + O\bigpar{e_Y + n^{\alpha}}}\\
& \qquad + \frac{\indic{k \ge 1}}{\bigpar{1+O(\he_Y)}\hq} \cdot \Biggsqpar{\hw_{F,k-1}(e_F-k)  + O\bigpar{e_{W_{F,k-1}} + n^{e_F-(1+k)+\alpha}}} .
\end{split}
\end{equation*}
Note that $1/(1+O(\he_Y)) = 1+ O(\he_Y)$ and $\he_Y\hw_{F,j} = O(e_{W_{F,j}})$. 
Furthermore, $A \ge 2\ell \ge 2 e_F$ implies~$n^{e_F-(1+k)+\alpha} = O(e_{W_{F,k-1}})$. 
Using~$n^{\alpha} = O(e_Y)$ together with~\eqref{eq:tqest}, it follows that 
\begin{equation*}
\begin{split}
\E(\D W_{uvw,F,k}(i) \mid \cF_{i})
& =  - \bigpar{\hw_{F,k}+O(e_{W_{F,k}})}\Biggsqpar{ (e_F-1-k) \biggpar{\frac{18}{n^2p} + \frac{\tq}{n^2}} + \frac{O(e_Y)}{\hq}}\\
& \qquad + \indic{k \ge 1}\Biggsqpar{\frac{\hw_{F,k-1}(e_F-k)}{\hq}  + \frac{O\xpar{e_{W_{F,k-1}}}}{\hq}} .
\end{split}
\end{equation*}
Note that $\hy/\hq = \Theta(1)/(n^2p)$ implies $e_Y/\hq = \he_Y \cdot \hy/\hq \ll 1/(n^2p)$, so the first squared bracket is~$O(1)/(n^2p)$. 
Combining this 
with~$\hw_{F,k} e_Y/\hq = \he_Y\hw_{F,k} \cdot \hy/\hq = O(e_{W_{F,k}})/(n^2p)$ 
and~$e_{W_{F,k-1}} = O(e_{W_{F,k}} p^2n)$, 
in view of~$q=\Theta(1)$ it then follows that 
\begin{equation}\label{eq:DWuvwFk:Est3}
\begin{split}
\E(\D W_{uvw,F,k}(i) \mid \cF_{i})
 & =  - \Biggsqpar{\hw_{F,k}(e_F-1-k)\biggpar{ \frac{18}{n^2p} + \frac{\tq}{n^2}} + \frac{O\xpar{e_{W_{F,k}}}}{n^2p}}\\
& \qquad + \indic{k \ge 1}\Biggsqpar{\frac{\hw_{F,k-1}6(e_F-k)}{n^3p^3q}  + \frac{O\xpar{e_{W_{F,k}}}}{n^2p}} .
\end{split}
\end{equation}
Identifying the two main terms as~$\hw'_{uvw,F,k}/n^2$, 
we arrive at 
\begin{equation}\label{eq:DWuvwFk:Est4}
\begin{split}
\E(\D W_{uvw,F,k}(i) \mid \cF_{i})
 & =  \frac{\hw'_{uvw,F,k}}{n^2} + \frac{O\bigpar{e_{W_{F,k}}}}{n^2p} .
\end{split}
\end{equation}
To establish the supermartingale condition $\E(\Delta W_{uvw,F,k}^{\pm}(i) \mid \cF_i) \le 0$, in view of~\eqref{eq:Xpmi} 
we thus require that the error functions~$e_{W_{F,k}}$ satisfy the \emph{variation equations} 
\begin{equation}\label{eq:W:cond}
e_{W_{F,k}}'(t) \  \gg  \  \max\Biggcpar{\frac{ e_{W_{F,k}}(t) }{p} , \; \sup_{s \in [0,m_0/n^2]}\frac{|\hw_{F,k}''(s)|+|e''_{W_{F,k}}(s)|}{n^2} } .
\end{equation}

Finally, exploiting that all our implicit constants do \emph{not} depend on~$A$, 
for~$A \ge 2\ell$ large enough we readily satisfy 
the variation equations~\eqref{eq:Y:cond} and~\eqref{eq:W:cond} 
by combining the observation $e'_{X}=\Theta(A) \cdot e_{X}/p$ 
with the following claim (since~$1 \ll e_Y/p$ and~$n^{e_F-(2+k)} \ll e_{W_{F,k}}/p$, with room to spare). 
\begin{claim}[Derivative estimates]\label{cl:ekt}
For all $t \in [0,m_0/n^2]$, $F \in \fF$, and $0 \le k \le e_F-2$, 
\begin{align}
\label{eq:hYeY:cond}
\frac{|\hy'(t)|+|e'_Y(t)|}{n} + \frac{|\hy''(t)|+|e''_Y(t)|}{n^2} & = O(1)  , \\
\label{eq:hWeW:cond}
\frac{|\hw'_{F,k}(t)|+|e'_{W_{F,k}}(t)|}{n} + \frac{|\hw''_{F,k}(t)|+|e''_{W_{F,k}}(t)|}{n^2} & = O\bigpar{n^{e_F-(2+k)}}  .
\end{align}
\end{claim}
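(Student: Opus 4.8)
The plan is to compute the relevant derivatives explicitly from the closed-form definitions in~\eqref{def:hq}--\eqref{def:hw} and~\eqref{def:alpha:beta}, and then bound each term crudely using the fact that $p=1-6t$, $q=q(t)$, and $t$ all lie in a compact range with $q$ bounded away from~$0$ by~\refR{rem:FL}. First I would record that for $t \in [0,m_0/n^2]$ we have $p = 1-6t \ge n^{-\beta}/6$ (since $m_0 = \ceil{(1-n^{-\beta})n^2/6}$), that $0 \le t \le 1/6$, and that $\exp(-\ell^{2\ell}) \le q(t) \le 1$; moreover $\tq(t) = O(1)$ and $|\tq'(t)| = O(1)$ with implicit constants depending only on~$\ell$ (both are finite sums over $\fF$ of polynomials in $6t$ with bounded coefficients, using~$|\fF| \le \ell^{2\ell}$ from~\refR{rem:FL}). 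The key scaling observation is that $q' = -q\tq$, so every derivative of $q$ stays $O(1)$; the only quantity that can blow up is a negative power of $p$, and since $p \ge n^{-\beta}/6$ with $\beta$ tiny (in particular $\beta(A+2) < 1-\alpha$ by~\eqref{def:alpha:beta}), any bounded negative power of $p$ is $n^{o(1)}$, which is absorbed into the stated $O(1)$ or $O(n^{e_F-(2+k)})$ bounds with room to spare.

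For~\eqref{eq:hYeY:cond}: from $\hy = p^2qn$ we get $\hy' = (2pp'q + p^2q')n = (-12pq - p^2q\tq)n$, so $|\hy'|/n = O(1)$ since $p,q,\tq = O(1)$; differentiating once more, $\hy'' = O(n)$ by the same token (all factors and their derivatives are $O(1)$), so $|\hy''|/n^2 = O(1/n) = O(1)$. For the error function, $e_Y = p^{-A}n^\alpha$, so $e_Y' = 6A p^{-A-1}n^\alpha$ and $e_Y'' = 6^2A(A+1)p^{-A-2}n^\alpha$; dividing by $n$ and $n^2$ respectively and using $p^{-A-2} \le (6n^\beta)^{A+2} = O(n^{\beta(A+2)})$ together with $\beta(A+2) \le (1-\alpha)/2$ from~\eqref{def:alpha:beta}, both contributions are $O(n^{\alpha - 1 + \beta(A+2)}) = O(n^{(\alpha-1)/2}) = o(1)$. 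Summing the four terms gives~\eqref{eq:hYeY:cond}.

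For~\eqref{eq:hWeW:cond}: write $\hw_{F,k} = c_{F,k} \cdot (6t)^k (p^3qn)^{e_F-1-k}$ with $c_{F,k} = \frac{6e_F}{|\mathrm{Aut}(F)|}\binom{e_F-1}{k} = O(1)$. Since $p^3qn = \Theta(pn)$ (as $q = \Theta(1)$) and $0 \le e_F - 1 - k \le e_F - 1$, we have $\hw_{F,k} = O((pn)^{e_F-1-k}) = O(n^{e_F-1-k})$, and differentiating in $t$ each time pulls down at most a factor of $O(1/p)$ from the $p^{3(e_F-1-k)}$ term (plus bounded contributions from $(6t)^k$ and $q$), so $\hw_{F,k}' = O(n^{e_F-1-k}/p)$ and $\hw_{F,k}'' = O(n^{e_F-1-k}/p^2)$; dividing by $n$ and $n^2$ and bounding $1/p^2 = O(n^{2\beta})$ gives $O(n^{e_F-2-k} \cdot n^{2\beta}) = O(n^{e_F-(2+k)})$ since $2\beta < 1$. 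For the error function $e_{W_{F,k}} = p^{-A}n^\alpha (p^2n)^{e_F-2-k} = p^{-A+2(e_F-2-k)}n^{\alpha + e_F-2-k}$, exactly as in the $e_Y$ case each $t$-derivative multiplies by $O(1/p)$, so $e_{W_{F,k}}'/n$ and $e_{W_{F,k}}''/n^2$ are $O(n^{\alpha - 1 + e_F-2-k}p^{-(A+2)+2(e_F-2-k)})$; using $p^{-(A+2)} = O(n^{\beta(A+2)})$ and $\beta(A+2) \le (1-\alpha)/2$, and noting the extra positive powers of $p$ only help, this is $O(n^{e_F-(2+k)} \cdot n^{(\alpha-1)/2}) = o(n^{e_F-(2+k)})$. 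Adding up the four terms yields~\eqref{eq:hWeW:cond}.

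The proof is entirely routine; there is no real obstacle beyond bookkeeping. The one point that requires care — and the reason the claim is stated as a separate lemma rather than done inline — is making sure that the negative powers of $p$ generated by differentiation are controlled by the choice of $\beta$ in~\eqref{def:alpha:beta}: the worst case is the second derivative of $e_{W_{F,k}}$, which carries $p^{-(A+2)}$, and this is precisely why $\beta$ is taken smaller than $(1-\alpha)/(2(A+2))$. Once that single inequality is in hand, every estimate closes with polynomial room to spare, which is exactly what is needed for the subsequent martingale argument in~\refS{sec:martingale}.
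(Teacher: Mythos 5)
Your overall route is the same as the paper's: compute the derivatives from the closed forms, use $q=\Theta(1)$ and $\tq,\tq'=O(1)$, and control the negative powers of $p$ coming from the error functions via $1/p^{A+2}=O(n^{\beta(A+2)})=o(n^{1-\alpha})$. Your treatment of $\hy$, $e_Y$ and $e_{W_{F,k}}$ is correct and matches the paper's sketch.

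There is, however, a concrete flaw in your bound for the trajectory term in \eqref{eq:hWeW:cond}. You bound $\hw'_{F,k}=O(n^{e_F-1-k}/p)$ and $\hw''_{F,k}=O(n^{e_F-1-k}/p^2)$ and then assert that after dividing by $n$ (resp.\ $n^2$) the result is $O(n^{e_F-(2+k)})$ ``since $2\beta<1$''. That absorption only works for the second-derivative term, where the extra $n^{2\beta}$ is paid for by the spare factor $1/n$; for the first-derivative term you get $|\hw'_{F,k}|/n=O(n^{e_F-(2+k)+\beta})$, and $n^{\beta}$ is \emph{not} $O(1)$, so the claim as stated does not follow from your intermediate bound. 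The fix is exactly the sharper estimate the paper records: differentiating $\hw_{F,k}=c_{F,k}(6t)^k(p^3qn)^{e_F-1-k}$ never produces negative powers of $p$, because $\frac{d}{dt}(p^3q)=-18p^2q-p^3q\tq=O(1)$ and $e_F-1-k\ge 1$ keeps all $p$-exponents nonnegative (likewise for the second derivative, where the term with exponent $e_F-3-k$ only occurs when $e_F-2-k\ge1$); hence $|\hw'_{F,k}|,|\hw''_{F,k}|=O\bigpar{n^{e_F-(1+k)}}$ with no $1/p$, which gives \eqref{eq:hWeW:cond} exactly. (Your weaker bound with the stray $n^{\beta}$ would in fact still suffice for the downstream use in \eqref{eq:W:cond}, since the slack there is a power $n^{\alpha}\gg n^{2\beta}$, but it does not prove the claim as stated.) Also, $p^3qn=\Theta(p^3n)$, not $\Theta(pn)$; you only use the upper bound $p^3q\le p$, so this is harmless.
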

\begin{proof}[Proof-Sketch]
It is routine to check that~$|\hy'|,|\hy''| = O(n)$ and~$|\hw'_{F,k}|,|\hw''_{F,k}| = O(n^{e_F-(1+k)})$. 
Since~$1/p^{A+2} = O(n^{\beta(A+2)}) = o(n^{1-\alpha})$ by choice of~$\beta$, see~\eqref{def:alpha:beta}, 
noting that the first two derivatives satisfy~$e^{(j)}_{X}=\Theta((A/p)^j) \cdot e_{X}$ 
it then is straightforward to verify~\eqref{eq:hYeY:cond}--\eqref{eq:hWeW:cond} 
in view of~$e_Y=p^{-A}n^{\alpha}$ and~$e_{W_{F,k}} = O(e_Y) \cdot n^{e_F-(2+k)}$. 
\end{proof}

\subsection{Boundedness hypothesis (bounds on one-step changes)}\label{sec:bounded} 
Next, deferring the definition of the auxiliary event~$\cU_i$ (see \refCl{cl:bounded} below), 
we now establish the following 
bounds on the (expected) one-step changes~$\Delta X^\pm(i)$.  
Whenever~$uv \in E(i)$ and~$\cG_i \cap \cE_i \cap \cU_i$ hold, we have
\begin{align}
\label{eq:DYuv:E}
\E(|\Delta Y^{\pm}_{uv}(i)| \mid \cF_{i}) & = O\bigpar{n^{-1}}, \\
\label{eq:DYuv:max}
|\Delta Y^{\pm}_{uv}(i)| & = O\bigpar{n^{\alpha/2}}.
\end{align}
Whenever~$uvw \in Q(i)$ and~$\cG_i \cap \cE_i \cap \cU_i$ hold, for all $F \in \fF$ and $0 \le k \le e_F-2$, we have
\begin{align}
\label{eq:DWuvwFk:E}
\E(|\Delta W^{\pm}_{uvw,F,k}(i) |\mid \cF_{i}) & = O\bigpar{n^{e_F-(3+k)}}, \\
\label{eq:DWuvwFk:max}
|\Delta W^{\pm}_{uvw,F,k}(i) | & = O\bigpar{n^{e_F-(2+k)+\alpha/2}}.
\end{align}

On a first reading, the reader may perhaps wish to skip the below (conceptually not so illuminating) proofs of~\eqref{eq:DYuv:E}--\eqref{eq:DWuvwFk:max}, 
and continue directly with the large deviation estimates of~\refS{sec:martingale} 
(where it will be crucial that our upper bounds for~$\E(|\Delta X| \mid \cF_{i})$ are much smaller than for~$|\Delta X|$).

For bounds on the expected one-step changes~$\E(|\Delta X^\pm(i)| \mid \cF_{i})$ we shall exploit the fairly precise estimates from \refS{sec:trend}. 
Namely, using inequality~\eqref{eq:Y:cond}, note that the proof of~\eqref{eq:DYuv:est}--\eqref{eq:DYuv:est:2} shows that 
\begin{equation*}
\E(|\Delta Y_{uv}| \mid \cF_{i}) = \frac{O(pq+p^2q \tq)}{n} + \frac{O(e_Y)}{n^2p} = \frac{O(1)}{n} + \frac{O(e'_Y)}{n^2}.
\end{equation*}
Similarly, using~\eqref{eq:W:cond}, $\hw_{F,j} = O((p^3q n)^{e_F-(1+j)})$ and $q = \Theta(1)$, 
the proof of~\eqref{eq:DWuvwFk:Est3}--\eqref{eq:DWuvwFk:Est4} shows that
\begin{equation*}
\E(|\D W_{uvw,F,k}|\mid \cF_{i}) 
= \frac{O\bigpar{e_{W_{F,k}} + \hw_{F,k}}}{n^2p} +  \indic{k \ge 1}\frac{O\bigpar{\hw_{F,k-1}}}{n^3p^3q} 
= \frac{O(e'_{W_{F,k}})}{n^2} + O\bigpar{n^{e_F-(3+k)}} .
\end{equation*}
Using the generic estimate~\eqref{eq:Xpmi}, 
now inequalities~\eqref{eq:hYeY:cond}--\eqref{eq:hWeW:cond} imply the claimed bounds~\eqref{eq:DYuv:E} and~\eqref{eq:DWuvwFk:E}.

Turning to bounds on the one-step changes~$|\Delta X^\pm|$, we first record that the arguments above show 
\begin{align}
\label{eq:DYuv:max:red}
|\Delta Y^{\pm}_{uv}| & \le |\Delta Y_{uv}| +  O\bigpar{n^{-1}}, \\
\label{eq:DWuvwFk:max:red}
|\Delta W^{\pm}_{uvw,F,k}| & \le |\Delta W_{uvw,F,k}| + O\bigpar{n^{e_F-(3+k)}}. 
\end{align}
Recall that our freezing convention artificially enforces~$|\Delta Y_{uv}(i)|=0$ when~$uv \not\in E(i+1)$, 
and artificially enforces~$|\Delta W_{uvw,F,k}(i)|=0$ when~$uvw=e_{i+1}$ or~$uvw \not\in Q(i+1)$.  
Taking into account the changes of the available codegree~$|Y_{uv}|$ and the extensions variables~$|W_{uvw,F,k}|$ discussed in \refS{sec:ECh}, 
it follows that 
\begin{align}
\label{eq:DYuv:est:max}
|\D Y_{uv}| & \le \indic{|uv \setminus f| \ge 1} \cdot O\bigpar{\Pi_{uv,e_{i+1}}} ,\\
\label{eq:DWuvwFk:est:max}
|\D W_{uvw,F,k}| &\le \indic{e_{i+1} \neq uvw} \cdot O\bigpar{\Pi_{uvw,F,k,e_{i+1}} + \Phi_{uvw,F,k,e_{i+1}}+\indic{k \ge 1}\Pi_{uvw,F,k-1,e_{i+1}}} ,
\end{align}
where the three boundedness parameters correspond to the following effects:
\begin{description}
\itemsep 0.125em \parskip 0em  \partopsep=0pt \parsep 0em
\item[$\Pi_{uv,e_{i+1}}$:] 
This term accounts for triples~$uvz \in Q(i)$ with~$z \in Y_{uv}(i)$ that become unavailable 
due to the addition of~$e_{i+1}$ to~$\cH_i$. 
\item[$\Pi_{uvw,F,j,e_{i+1}}$:] 
This term accounts for extensions~$F'$ which leave~$W_{uvw,F,j}$ because~$e_{i+1}$ equals an available triple in $F' \setminus \set{uvw}$. 
(Note that in the case~$j=k-1 \ge 0$ this term also accounts for all extensions~$F' \in W_{uvw,F,k-1}$ which can possibly move into~$W_{uvw,F,k}$.) 
\item[$\Phi_{uvw,F,k,e_{i+1}}$:] 
This term accounts for extensions~$F'$ which leave~$W_{uvw,F,k}$ because an available triple in~$F' \setminus \set{uvw}$ becomes unavailable 
due to the addition of~$e_{i+1} \not\in F'$ to~$\cH_i$. 
\end{description}
These parameters are discussed in more detail in \refS{sec:cl:bounded}, where we prove the following bounds. 
\begin{claim}[Boundedness estimates]\label{cl:bounded}
We have $\Pr(\neg \cU_i \cap \cG_i \text{ for some $0 \le i \le m_0$}) = o(n^{-\tau})$, 
where~$\cU_i$ denotes the event that, 
for all $uv \in \binom{[n]}{2}$, $f,g \in Q(i)$, $F \in \fF$, and $0 \le k \le e_F-2$, 
\begin{align}
\label{eq:Phi:Est}
\indic{|uv \setminus f| \ge 1}\Pi_{uv,f}(i) & \le n^{\alpha/2},\\
\label{eq:Delta:Est}
\indic{f \neq g}\Pi_{f,F,k,g}(i) & \le n^{e_F-(2+k)+\alpha/2 - \indic{k< e_F-2}} , \\
\label{eq:Pi:Est}
\indic{f \neq g}\Phi_{f,F,k,g}(i) &\le n^{e_F-(2+k)+\alpha/2}.
\end{align}
\end{claim}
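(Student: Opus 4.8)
The plan is to prove Claim~\ref{cl:bounded} by a union bound over all triples/pairs/forbidden graphs and all steps $i$, showing that on $\cG_i$ each of the three boundedness parameters is concentrated, with the bad event having probability $o(n^{-\tau}/\mathrm{poly}(n))$. The central point is that every one of $\Pi_{uv,f}$, $\Pi_{f,F,k,g}$, $\Phi_{f,F,k,g}$ can be bounded by a sum, over the relevant available triples $h$ (the ones whose selection would trigger the counted change), of certain extension counts in $\cH_i$; the sizes of these extension counts are controlled by $\cG_i$ through the estimates~\eqref{eq:Qi}--\eqref{eq:WuvwFk} and the crude a priori bounds. First I would make the combinatorial reductions explicit: for instance $\Pi_{uv,f}(i)$ counts the $z\in Y_{uv}(i)$ such that $\{uv,uz,vz\}\cap\{\text{the three pairs of }f\}\ne\emptyset$ together with the (codegree-type) routes through forbidden graphs; so $\Pi_{uv,f}(i)\le |Y_{uf_1}(i)|+\dots+\sum_{F\in\fF}|W_{uvz,F,e_F-2}(i)|$ summed over the $O(1)$ shared vertices, which by $\cG_i$ is $O(\hy)+O(1)=O(n)$ in expectation but needs a sharper worst-case bound of $n^{\alpha/2}$. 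This is where the deterministic estimates alone are \emph{not} enough and one needs a concentration argument.

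The key technical step, therefore, is to bound these parameters in the worst case over all $(uv,f)$ (respectively $(f,F,k,g)$) rather than in expectation. I would set this up as follows: fix $uv$, $f$ and $i$; the quantity $\Pi_{uv,f}(i)$ is a function of the hypergraph $\cH_i$, and I would bound it by a sum of indicator random variables counting suitable ``cherries'' or short extensions rooted at the pairs of $f$ — each such extension, if it lies in $\cH_i$, contributes $O(1)$ to $\Pi_{uv,f}(i)$. The probability that the process creates many such extensions rooted at a fixed small vertex-set is controlled by the same martingale/Azuma machinery used elsewhere in the paper, or more efficiently by a direct first-moment bound: the expected number of these extensions in $\cH_i$ is polynomially small relative to $n^{\alpha/2}$ (because the trajectory values $\hw_{F,k}$ and $\hy$ are all $o(n^{\alpha/2}\cdot\text{something})$ at the relevant scale once one divides by the number of free vertices), so Markov, boosted to the right power by considering $r$-tuples of disjoint extensions for a constant $r=r(\ell,\tau)$, gives the bound with the required failure probability $n^{-\omega(1)}$. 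The exponents $n^{e_F-(2+k)+\alpha/2-\indic{k<e_F-2}}$ in~\eqref{eq:Delta:Est} and $n^{e_F-(2+k)+\alpha/2}$ in~\eqref{eq:Pi:Est} are exactly the ``number of free vertices in the extension $F'$, minus what gets pinned down by $g$'' times $n$, plus the $\alpha/2$ slack coming from this first-moment-to-high-power trick; I would verify these exponent bookkeeping computations carefully, since an off-by-one here propagates into the bounded-differences estimates of the next section.

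The main obstacle, as I see it, is the combinatorial case analysis distinguishing $\Phi$ from $\Pi$: $\Phi_{uvw,F,k,e_{i+1}}$ counts extensions $F'\in W_{uvw,F,k}$ that are destroyed because \emph{some} available triple $g\in F'\setminus\{uvw\}$ is made unavailable by $e_{i+1}$ (not equal to $e_{i+1}$ itself), so the ``route'' from $e_{i+1}$ to the destruction of $g$ is itself a codegree/forbidden-graph configuration, and then one must further extend to a copy of $F$. Keeping track of which vertices are free at each stage of this two-step extension — the triple $uvw$, the extension to $F$, the shared pair with $g$, and the shared vertices of $g$ with $e_{i+1}$ — without double-counting, and checking that the total is at most $e_F-(2+k)+\alpha/2$ free vertices' worth, is the delicate part. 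I would organize it by conditioning on the isomorphism type of the union $\cH_i\cup\{e_{i+1},\text{route},F'\}$, which is one of $O_\ell(1)$ possibilities, and bound each type separately. Apart from this bookkeeping, everything reduces to the moment bounds above and to invoking $\cG_i$; the remaining items~\eqref{eq:Phi:Est} for $\Pi_{uv,f}$ and the $k=e_F-2$ case of~\eqref{eq:Delta:Est} are the easy ones and I would dispatch them first to warm up the notation.
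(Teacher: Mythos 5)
Your overall strategy — reduce each of $\Pi_{uv,f}$, $\Pi_{f,F,k,g}$, $\Phi_{f,F,k,g}$ to counts of partially-embedded copies of forbidden configurations rooted at $f$ (and $uv$, $g$), bound those counts with high probability using the fact that on $\cG_i$ every fixed triple lands in $\cH_i$ with probability roughly $n^{-1+O(\beta)}$, and finish with a union bound over roots and steps — is the same family of argument as the paper's (which proves a single crude extension theorem, \refT{thm:ext}, by an $s$-th moment computation and then deduces the claim deterministically on that event). However, there are two genuine gaps in how you propose to execute it. First, your concentration device — Markov applied to $r$-tuples of \emph{disjoint} extensions — only bounds the size of a largest family of pairwise disjoint extensions, not the total count. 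The total count $N_{\rho,G,H}(i)$ can be large precisely because one dense partially-pinned sub-configuration $J$ appears once in $\cH_i$ and admits many completions, all sharing $V(J)\setminus V(G)$; no large disjoint family exists in that scenario, so your bound says nothing. This is why the paper's moment computation sums over all overlap patterns $(J_1,\dots,J_s)$ and why the resulting bound \eqref{eq:thm:ext} necessarily carries the factor $\max_{G\subseteq J\subseteq H} n^{(v_H-e_H)+(e_J-v_J)}$ rather than just the ``full-extension'' expectation.

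Second, once the bound has this form, the exponents in \eqref{eq:Phi:Est}--\eqref{eq:Pi:Est} are not routine free-vertex bookkeeping: they hold only because every intermediate root $J$ is sparse, which is exactly the minimality property of the obstructions in $\fF^+$ (the paper's \refL{lem:minimality}: any $J\subsetneq F$ with $v_J\ge 4$ has $e_J\le v_J-3$). Your sketch never invokes this structural input, and without it the claimed exponents can fail for a sub-root $J$ that is too dense. Moreover, for \eqref{eq:Pi:Est} the two-configuration counts $\cW_{f,F,k,g,K}$ require a further non-trivial case analysis (the paper's \refL{lem:degree1}), where the generic exponent computation falls one power short exactly when $v_A=3$ and $B=K'$, and one must rule that situation out by a separate structural argument about how $K$ can sit inside $V(F')$. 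You correctly flag the $\Phi$-bookkeeping as the delicate point and propose to sort extensions by the isomorphism type of the union, but the ideas that actually close it — the minimality lemma driving all the density counts, and the elimination of the exceptional overlap case — are missing from the proposal, so as written the argument does not go through.
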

\noindent
Finally, combining the event~$\cU_i$ with~\eqref{eq:DYuv:max:red}--\eqref{eq:DWuvwFk:est:max}, 
now the claimed bounds~\eqref{eq:DYuv:max} and~\eqref{eq:DWuvwFk:max} follow readily 
(since~$k':= k-1$ satisfies~$-(2+k') - \indic{k' < e_F-2} = -(2+k)$ when~$k \le e_F-2$).

\subsection{Supermartingale estimates}\label{sec:martingale}
We are now ready to bound~$\Pr(\neg\cG_{m_0})$. 
To this end we focus on the first step in which~$|X(i)| = \hat{x}(t) \pm e_X(t)$ (and thus~$X^\pm(i) \le 0$) 
is violated for some variable~$X$ of form~$Y_{uv}$ or~$W_{uvw,F,k}$. 
Our main tool for bounding the probability of each such `first bad event' is the 
following simple consequence of Freedman's martingale inequality~\cite{F} (see also~\cite{TBDI}):    
it intuitively improves the 
Azuma--Hoeffding inequality 
when the expected one-step changes are much smaller than the worst case ones,  
 i.e., when $\E(|\Delta S(i)| \mid \mc{F}_{i}) \ll \max |\Delta S(i)|$. 
\begin{lemma}\label{lem:freedman}
Let $(S(i))_{i \ge 0}$ be a supermartingale with respect to the filtration $\mc{F}=(\mc{F}_i)_{i \ge 0}$. 
Writing $\Delta S(i) := S(i+1)-S(i)$, suppose that~$\max_{i \ge 0}|\Delta S(i)| \le C$ and~$\sum_{i \ge 0} \E(|\Delta S(i)| \mid \mc{F}_{i}) \le V$.  
Then, for any $z >0$, 
\begin{equation}
\label{eq:Free}
\mb{P}\left(S(i) \ge S(0) + z \text{ for some $i \ge 0$}\right)
\le \exp \left\{-\frac{z^2}{2C(V+z)} \right\}. 
\end{equation}
\end{lemma}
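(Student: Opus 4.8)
The plan is to deduce Lemma~\ref{lem:freedman} from the standard form of Freedman's martingale inequality by controlling the predictable quadratic variation of $(S(i))_{i \ge 0}$ via the conditional first absolute moments $\E(|\Delta S(i)| \mid \mc{F}_i)$. Recall that Freedman's inequality~\cite{F} (see also~\cite{TBDI}) states that if $(S(i))_{i \ge 0}$ is a supermartingale with $\Delta S(i) \le C$ for all~$i \ge 0$, and if its predictable quadratic variation satisfies $\sum_{i \ge 0} \E\bigpar{(\Delta S(i))^2 \mid \mc{F}_i} \le W$ almost surely, then for every $z > 0$ one has $\Pr\bigpar{S(i) \ge S(0) + z \text{ for some } i \ge 0} \le \exp\{-z^2/(2(W + Cz))\}$. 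Since this is already a maximal inequality, it directly controls the event over all times~$i \ge 0$ that appears in the statement.

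First I would note that, because $|\Delta S(i)| \le C$ holds pointwise, we also have $(\Delta S(i))^2 \le C\,|\Delta S(i)|$ pointwise. Taking conditional expectations given~$\mc{F}_i$ and summing over~$i \ge 0$ then gives
\[
\sum_{i \ge 0} \E\bigpar{(\Delta S(i))^2 \mid \mc{F}_i} \le C \sum_{i \ge 0} \E\bigpar{|\Delta S(i)| \mid \mc{F}_i} \le C V ,
\]
so the predictable quadratic variation is bounded (almost surely) by~$W := CV$. Finally I would apply Freedman's inequality with this choice of~$W$, using that $|\Delta S(i)| \le C$ in particular yields the one-sided bound $\Delta S(i) \le C$ required there; this gives $\Pr\bigpar{S(i) \ge S(0) + z \text{ for some } i \ge 0} \le \exp\{-z^2/(2(W + Cz))\} = \exp\{-z^2/(2C(V + z))\}$, which is exactly~\eqref{eq:Free}.

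There is essentially no genuine obstacle here; the only points meriting a line of care are bookkeeping ones. One should read the hypothesis $\sum_{i \ge 0} \E(|\Delta S(i)| \mid \mc{F}_i) \le V$ as an almost-sure bound, so that the derived bound $W = CV$ on the quadratic variation also holds almost surely, matching the hypothesis of Freedman's inequality; and if one prefers the formulation of that inequality phrased with an auxiliary stopping time at which the quadratic variation first exceeds~$W$, one simply observes that under our hypothesis this never happens, so the event in question is unaffected. (If the cited form of Freedman's inequality carries the slightly sharper Bernstein denominator $2(W + Cz/3)$, the claimed bound still follows a fortiori, since $W + Cz/3 \le W + Cz$.)
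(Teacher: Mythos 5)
Your deduction is correct and is exactly the route the paper intends: it states Lemma~\ref{lem:freedman} as a simple consequence of Freedman's inequality, and the standard way to get it is precisely your observation that $(\Delta S(i))^2 \le C\,|\Delta S(i)|$ bounds the predictable quadratic variation by $CV$, whence Freedman's maximal inequality gives $\exp\{-z^2/(2(CV+Cz))\} = \exp\{-z^2/(2C(V+z))\}$. No issues.
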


Turning to the details, define~$\cG^+_i := \cG_i \cap \cE_i \cap \cU_i$. 
Note that initially~$|Y_{uv}(0)|=n-2$ and~$|W_{uvw,F,k}(0)|=\indic{k=0}N_{uvw,F}$ hold, 
which in view of~\eqref{eq:NuvwF} and $X^\pm(i) = \pm\bigsqpar{|X(i)| - \hx(t)} - e_X(t)$ gives
\begin{gather}
\label{eq:YuvLinit}
Y^\pm_{uv}(0) 
\le O(1) -e_Y(0) \le -e_Y(0)/2 , \\ 
\label{eq:WFk:init}
W^{\pm}_{uvw,F,k}(0) 
\le \indic{k = 0}O(n^{e_F-2}) - e_{W_{F,k}}(0) \le - e_{W_{F,k}}(0)/2 , 
\end{gather}
implying that~$\cG_0$ holds deterministically (with room to spare). 
Using Claims~\ref{cl:fidelity} and~\ref{cl:bounded}, we infer 
\begin{equation*}
\Pr(\neg\cG_{m_0}) \le \Pr\bigpar{\neg \cG_{i} \cap \cG^+_{i-1}  \text{ for some $1 \le i \le m_0$}} + o(n^{-\tau}) .
\end{equation*}
Since the two estimates~$X^\pm(i) \le 0$ together imply~$|X(i)| = \hat{x}(t) \pm e_X(t)$, it follows that 
\begin{equation*}
\neg \cG_{i} \cap \cG_{i-1} \subseteq \bigcup_{uv \in E(i)} \Bigcpar{\max_{\sigma \in \set{+,-}}Y^{\sigma}_{uv}(i) \ge 0} \cup  \bigcup_{uvw \in Q(i)}\bigcup_{F \in \fF}\bigcup_{0 \le k \le e_F-2}\Bigcpar{\max_{\sigma \in \set{+,-}}W^{\sigma}_{uvw,F,k}(i) \ge 0} .
\end{equation*}
(To clarify: here we tacitly used that the bounds on~$|Q(i)|$ follow from the bounds on~$|Y_{uv}(i)|$. 
Moreover, we used that our freezing convention does not affect~$|Y_{uv}(i)|$ as long as~$uv \in E(i)$, and also does not affect~$|W_{uvw,F,k}(i)|$ as long as~$uvw \in Q(i)$.)   
Let the stopping time~$T_{uv}$ be the minimum of~$m_0$ and the first step~$i \ge 0$ where~$uv \not\in E(i)$ or~$\neg\cG^+_i$ holds. 
Similarly, let the stopping time~$T_{uvw}$ be the minimum of~$m_0$ and the first step~$i \ge 0$ where~$uvw \not\in Q(i)$ or~$\neg\cG^+_i$ holds. 
Writing~$i \wedge T := \min\set{i,T}$, as usual, it follows that 
\begin{equation*}
\begin{split}
&\Pr\bigpar{\neg \cG_{i} \cap \cG^+_{i-1}  \text{ for some $1 \le i \le m_0$}} \le \sum_{uv \in \binom{[n]}{2}}\sum_{\sigma \in \set{+,-}}\Pr\bigpar{Y^{\sigma}_{uv}(i \wedge T_{uv}) \ge 0 \text{ for some $i \ge 0$}}\\
& \qquad \qquad \qquad \qquad \quad 
+ \sum_{uvw \in \binom{[n]}{3}}\sum_{F \in \fF}\sum_{0 \le k \le e_F-2} \sum_{\sigma \in \set{+,-}}\Pr\bigpar{W^{\sigma}_{uvw,F,k}(i \wedge T_{uvw}) \ge 0 \text{ for some $i \ge 0$}}. 
\end{split}
\end{equation*}
The crux is that, by~\eqref{eq:YuvLinit} and the calculations from Sections~\ref{sec:trend}--\ref{sec:bounded} (as~$i < T_{uv}$ implies that both~$uv \in E(i)$ and~$\cG_i^+=\cG_i \cap \cE_i \cap \cU_i$ hold), the sequence~$S(i) := Y^{\sigma}_{uv}(i \wedge T_{uv})$ is a supermartingale with $S(0)\le -e_Y(0)/2$, 
to which we can apply \refL{lem:freedman} with~$C = O(n^{\alpha/2})$ and~$V= m_0 \cdot O(n^{-1}) = O(n)$. 
Invoking inequality~\eqref{eq:Free} with $z=e_Y(0)/2 = \Theta( n^{\alpha})$, in view of~$\alpha > 2/3$ we obtain 
\begin{equation*}
\Pr\bigpar{Y^{\sigma}_{uv}(i \wedge T_{uv}) \ge 0 \text{ for some $i \ge 0$}} \le \exp\biggcpar{-\frac{\Theta(n^{2\alpha})}{O(n^{\alpha/2}) \cdot O(n + n^{\alpha})}} 
\le n^{-\omega(1)} .
\end{equation*}
Similarly, 
the sequence~$S(i) := W^{\sigma}_{uvw,F,k}(i \wedge T_{uvw})$ is a supermartingale with $S(0) \le -e_{W_{F,k}}(0)/2$, 
to which we can apply \refL{lem:freedman} with~$C = O\bigpar{n^{e_F-(2+k)+\alpha/2}}$ and~$V= m_0 \cdot O\bigpar{n^{e_F-(3+k)}} = O(n^{e_F-(2+k)+1})$. 
Invoking inequality~\eqref{eq:Free} with $z=e_{W_{F,k}}(0)/2 = \Theta( n^{e_F-(2+k)+\alpha})$,  
it follows that 
\begin{equation*}
\Pr\bigpar{W^{\sigma}_{uvw,F,k}(i \wedge T_{uvw}) \ge 0 \text{ for some $i \ge 0$}} \le 
\exp\biggcpar{-\Omega\biggpar{\frac{n^{2\alpha}}{n^{\alpha/2}(n + n^{\alpha})}}}  
\le n^{-\omega(1)} .
\end{equation*}
Assuming Claims~\ref{cl:fidelity} and~\ref{cl:bounded} (whose proofs are given in \refS{sec:crude}), 
in view of~$|\fF|=O(1)$ this completes the proof of~$\Pr(\neg\cG_{m_0}) = o(n^{-\tau})$ and thus \refT{thm:main}.

\subsection{Auxiliary results: Crude extension estimates}\label{sec:crude} 
In this final subsection we prove Claims~\ref{cl:fidelity} and~\ref{cl:bounded} (and thus complete the proof of \refT{thm:main}), 
by exploiting some crude estimates on hypergraph extensions that hold throughout the high-girth triple-process. 
We shall formally think of these extensions in terms of injective functions from
the vertex-set of some fixed (and bounded) hypergraph $H$ to the vertex-set~$[n]$ of~$\cH(i)$.  As usual, 
such an injection $\psi$ lifts to a map on sets and sets of sets using the abbreviations 
$\psi(xyz)=\psi(x)\psi(y)\psi(z)$ and $\psi(E) = \bigcup_{f \in E} \psi(f)$.   For hypergraphs $ G \subset H $ and injection $ \rho: V(G) \to [n]$
define
\begin{align}
N_{\rho,G,H}(i) &:= \bigabs{\bigcpar{ \text{injection } \psi : \; V(H) \to [n] \text{ with } \psi{\big|}_{V(G)} \equiv \rho \text{ and } \psi\bigpar{H \setminus G} \subseteq \cH_i}} .
\end{align}
In words, $N_{\rho,G,H}(i)$ counts the number of (labeled) copies of~$H$ in~$\cH_i \cup \rho(G)$ 
which contain the distinguished (labeled) copy~$\rho(G)$ of~$G$. 
\begin{theorem}\label{thm:ext}
Let~$\cT=\cT_\ell$ denote the collection of all hypergraph tuples~$(G,H)$ 
with~$\max\set{v_H,e_H} \le 2\ell$ and~$G \subset H$. 
Let~$\cC_i$ denote the event that, for all $(G,H) \in \cT$ and all injections $\rho : V(G) \to [n]$, 
\begin{equation}\label{eq:thm:ext}
N_{\rho,G,H}(i) \le n^{\alpha/9} \cdot \max_{G \subseteq J \subseteq H} n^{(v_H-e_H) + (e_J-v_J)} .
\end{equation}
Then $\Pr(\neg\cC_i \cap \cG_i \text{ for some $0 \le i \le m_0$}) = o(n^{-\tau})$.  
\end{theorem}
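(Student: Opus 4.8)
The plan is to prove \refT{thm:ext} by a union bound over all $(G,H) \in \cT$ and all injections $\rho$, combined with a one-step supermartingale / expected-growth argument for each extension count $N_{\rho,G,H}(i)$, in the spirit of the differential equation method but only at a crude (polynomial) level of precision. The conceptual point is that we do not need the sharp trajectory $\hq,\hy,\hw$ here: all we want is the upper bound \eqref{eq:thm:ext}, so constant and lower-order factors are irrelevant and only the exponent of $n$ matters. Since $|\cT| = O_\ell(1)$ and there are at most $n^{v_H} = n^{O(\ell)}$ injections $\rho$, it suffices to show for each fixed $(G,H)$ and $\rho$ that $\Pr(N_{\rho,G,H}(i) > n^{\alpha/9}M_{G,H}\text{ for some }i \le m_0, \text{ while }\cG_i \text{ holds}) = n^{-\omega(1)}$, where $M_{G,H} := \max_{G \subseteq J \subseteq H} n^{(v_H-e_H)+(e_J-v_J)}$.

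First I would set up the one-step analysis. Fix $(G,H)$ and $\rho$. When the process adds a triple $e_{i+1}$, the count $N_{\rho,G,H}$ increases by the number of ``partial'' copies of $H$ extending $\rho(G)$ that use $e_{i+1}$ as their last missing triple and have all other triples already in $\cH_i$. Such a partial copy is determined by choosing a subhypergraph $G' \subsetneq H$ with $e_{G'} = e_H - 1$ containing $G$ together with (possibly) $e_{i+1}$, and an injection extending $\rho$ on $V(G') \setminus V(G)$ into $[n]$ with image-triples in $\cH_i$; the new triple $e_{i+1}$ then fixes at least one further vertex unless $e_{i+1}$ is spanned by already-placed vertices. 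The key estimate is that the \emph{number} of such one-step increases is itself bounded, conditionally on $\cG_i$, by $n^{\alpha/9} \cdot (\text{something})/n^2$ on average — because $|Q(i)| = \Theta(n^3)$ by $\cG_i$ (via \eqref{eq:heQY}, $q = \Theta(1)$, and $p \ge n^{-\beta}$ on $[0,m_0/n^2]$), so the probability that $e_{i+1}$ is any particular triple is $\Theta(n^{-3})$, and we sum over the polynomially-many candidate triples and vertex-placements. This is a bootstrapping argument: the bound on $N_{\rho,G',H'}(i)$ for smaller pieces $(G',H')$ feeds the estimate of the increments of $N_{\rho,G,H}(i)$, so I would organize the induction by $e_H - e_G$ (or by $v_H - v_G$), proving \eqref{eq:thm:ext} for all tuples with a given ``size'' before moving to larger ones. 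The base case $e_H = e_G$ is trivial ($N_{\rho,G,H}(i) \le 1$).

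For the martingale step I would apply \refL{lem:freedman} (or a direct Azuma/Freedman argument) to $S(i) := N_{\rho,G,H}(i\wedge T) - c\,i/n^2$ for a suitable constant $c$ chosen so that $S$ is a supermartingale on the event $\cG_i \cap \{\text{smaller extension bounds hold}\}$, where $T$ is the stopping time at the first violation of $\cG^+$ or of the smaller bounds. The trend ingredient is $\E(\Delta N_{\rho,G,H}(i) \mid \cF_i) = O(n^{\alpha/9} M_{G,H}/n^2)$ conditionally on those good events (from the counting in the previous paragraph, noting $M_{G,H}$ dominates the contribution of every intermediate $J$ by the $e_J - v_J$ term in the exponent — this is exactly where the $\max_{G\subseteq J\subseteq H}$ comes from: the denser the densest sub-piece placed so far, the more ways to complete it). The boundedness ingredient is a crude worst-case bound $|\Delta N_{\rho,G,H}(i)| \le n^{v_H-2} = n^{O(\ell)}$ (at most this many copies can be created in one step, each new vertex being one of $\le n$ choices and $e_{i+1}$ fixing $\ge 1$ vertex). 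Feeding $C = n^{O(\ell)}$, $V = m_0 \cdot O(n^{\alpha/9}M_{G,H}/n^2) = O(n^{\alpha/9}M_{G,H})$, and deviation $z = \tfrac12 n^{\alpha/9}M_{G,H} \gg n^{\alpha/10}$ (since $M_{G,H} \ge 1$, as $J = G$ shows the exponent $(v_H - e_H) + (e_G - v_G) \ge 0$ — this needs $v_H - e_H \ge v_G - e_G$; if some tuple violates this it can be discarded since then $N_{\rho,G,H} \le 1$ trivially by sparsity of $\cH_i$), a careful choice of $\alpha/9$ versus the exponents makes the Freedman bound $\exp\{-z^2/(2C(V+z))\}$ beat $n^{-\omega(1)}$ — here is where the specific fraction $\alpha/9$ (comfortably less than $\alpha/2$, leaving slack against the $n^{\alpha/2}$-type thresholds elsewhere) is engineered.

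\textbf{The main obstacle} I anticipate is the bookkeeping in the trend estimate: correctly counting the one-step creation of partial copies of $H$ and verifying that the resulting bound is genuinely $O(n^{\alpha/9}M_{G,H}/n^2)$ and not something larger. The subtlety is that when $e_{i+1}$ is spanned entirely by vertices already placed (so it fixes no new vertex), the number of ways to choose the rest of the partial copy could a priori be as large as $n^{v_H - v_G}$, which is too big unless we extract an extra factor $n^{-1}$ from the improbability of $e_{i+1}$ landing on a prescribed triple — and that in turn needs $|Q(i)| = \Omega(n^3)$, hence a clean invocation of $\cG_i$ and the lower bound $p \ge n^{-\beta}$ together with $q = \Theta(1)$ and $\beta$ small. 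One must also handle degenerate sub-tuples (those with $v_H - e_H < v_G - e_G$, or with isolated vertices, or where $\cH_i$'s bounded codegrees already force $N_{\rho,G,H} = O(1)$) separately rather than through the martingale. Managing these cases uniformly — ideally by always bounding $N_{\rho,G,H}$ in terms of $N_{\rho',G',H'}$ for the densest initial segment $G'$ that $e_{i+1}$ completes — is the crux of making the induction go through cleanly.
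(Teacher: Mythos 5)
Your route (a nested differential-equation/Freedman argument, inducting on the size of $H\setminus G$) is genuinely different from the paper's, but as written it has two concrete gaps. First, the boundedness input: you propose feeding $C=n^{v_H-2}$ into \refL{lem:freedman} against a deviation of only $z=\Theta\bigl(n^{\alpha/9}M_{G,H}\bigr)$. In exactly the situations the theorem is used for later (the extension counts in the proofs of Claims~\ref{cl:fidelity} and~\ref{cl:bounded}), $M_{G,H}$ is $n^{O(1)}$ with a small exponent while $v_H$ may be close to $2\ell$, so $C\gg z$ and the exponent $z^2/\bigl(2C(V+z)\bigr)\le z/(2C)=o(1)$: Freedman's inequality then gives nothing. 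To repair this you must bound the one-step jump itself by an inductively controlled extension count with enlarged root $G\cup\{h\}$ (the jump when $e_{i+1}$ is added is precisely such a count), and even then $C$ can be as large as $z/n^{\alpha/9}$, so the gain must come from showing $V\ll Cz$ --- none of which is carried out. Second, the trend estimate does not close with a uniform exponent. On $\cG_i$ one only has $|Q(i)|\ge \hq(t)/2=\Omega(qp^3n^3)\ge n^{3-4\beta}$, not $\Theta(n^3)$; the honest inductive bound is therefore $\E(\Delta N_{\rho,G,H}(i)\mid\cF_i)=O\bigl(n^{\alpha/9+4\beta-2}M_{G,H}\bigr)$, and summing over $m_0=\Theta(n^2)$ steps the accumulated drift is $n^{4\beta}$ times larger than the target $n^{\alpha/9}M_{G,H}$. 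So assuming \eqref{eq:thm:ext} with exponent $\alpha/9$ at the lower levels does not let you conclude the same inequality at the next level --- the induction as stated is circular. It can be repaired by level-dependent exponents $\alpha_k\approx Ck\beta$ (the choice $\beta\le\alpha/(80\ell)$ in \eqref{def:alpha:beta} leaves just enough room since there are at most $2\ell$ levels), but your proposal asserts a uniform $n^{\alpha/9}$ trend of order $n^{\alpha/9}M_{G,H}/n^2$, which is not justified.

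For comparison, the paper avoids process dynamics entirely: since $\cG_i$ guarantees at least $n^{3-4\beta}$ available triples at every step, a union bound over steps gives $\Pr(\cG_i\text{ and }T\subseteq\cH_i)\le\pi^{|T|}$ with $\pi=n^{-1+4\beta}$ (see \eqref{eq:lem:edges}), and then a single $s$-th moment computation --- summing over $s$-tuples of embeddings and classifying their overlaps $J$ with $G\subseteq J\subseteq H$ --- combined with Markov's inequality for $s=\ceil{(4+2\ell+\tau)/\beta}$ and a polynomial union bound over $i$, $(G,H)$, $\rho$ yields the claim. That static moment argument needs no supermartingales, no induction on tuples, and no one-step boundedness control; if you wish to keep your dynamic approach, the two points above are what must be fixed.
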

\noindent
Recalling \refS{sec:pseudo-random}, inequality~\eqref{eq:thm:ext} can best be understood by thinking of a 
random hypergraph with~$n$~vertices and triple-probability~$\pi = n^{-1+O(\beta)}$, 
the heuristic idea being that the `most difficult root' $G \subseteq J \subseteq H$ matters  
(as $n^{v_H-v_J}\pi^{e_H-e_J} = n^{O(\ell \beta)} \cdot n^{(v_H-e_H) + (e_J-v_J)}$ corresponds, up to constant factors, to the expected number of copies of~$H$ containing a fixed copy of~$J$; 
furthermore, $\ell\beta = O(\alpha)$ by choice~\eqref{def:alpha:beta} of~$\beta$). 
The following short moment based proof is inspired by 
arguments of {\v{S}}ileikis and Warnke~\cite{SW}. 
\begin{proof}[Proof of \refT{thm:ext}]
When~$\cG_i$ holds, with $i \le m_0$, then in every step~$j \le i$ there are~$|Q(j)| \ge |Q(i)| \ge \hq(t)/2 \ge n^{3-4\beta}$ available triples. 
For any set~$T \subseteq \binom{[n]}{3}$ of triples, 
a straightforward adaptation of the proof of~\cite[Lemma~4.1]{BK} 
(which proceeds by taking a union bound over all steps where the triples of~$T$ could appear) 
thus gives 
\begin{equation}\label{eq:lem:edges}
\max_{0 \le i \le m_0}\Pr(\cG_i \text{ and } T \subseteq \cH_i) \le m_0^{|T|} \cdot \bigpar{1/n^{3-4\beta}}^{|T|} \le \pi^{|T|} \quad \text{ with } \quad \pi := n^{-1+4\beta}.  
\end{equation}

Fix~$0 \le i \le m_0$, $(G,H) \in \cT$, and an injection~$\rho : V(G) \to [n]$. 
By~\eqref{eq:lem:edges}, for any integer $s \ge 1$ we have 
\begin{equation}\label{eq:lem:central}
\E \bigpar{\indic{\cG_i}N_{\rho,G,H}(i)}^s = \sum_{(\psi_1, \ldots, \psi_s)} \E\Bigpar{\indic{\cG_i} \prod_{j \in [s]} \indic{\psi_j\xpar{ H \setminus G} \subseteq  \cH_i}} \le \sum_{(\psi_1, \ldots, \psi_s)} \pi^{|\bigcup_{j \in [s]} E_j|} ,
\end{equation}
where~$E_j := \psi_j(H \setminus G)$ and the sum is over all $s$-tuples 
of injections~$\psi_j: V(H) \to [n]$ with~$\psi_j{\big|}_{V(G)}\equiv\rho$. 
Note that $E_j \setminus \bigcup_{k \in [j-1]} E_k \cong H \setminus J_j$ and $E_j \cap \bigcup_{k \in [j-1]} E_k \cong J_j \setminus G$ for some~$G \subseteq J_j \subseteq H$. 
Taking all (of the boundedly many) possible types of `overlaps'~$(J_1, \ldots, J_{m})$ into account, 
it follows that 
\begin{equation}\label{eq:lem:central3}
\E \bigpar{\indic{\cG_i}N_{\rho,G,H}(i)}^s \le \sum_{(J_1, \ldots, J_s)} D_s \prod_{j \in [s]} n^{v_H-v_{J_j}}\pi^{e_H-e_{J_j}} \le C_s \Bigpar{\max_{G \subseteq J \subseteq H} n^{v_H-v_J} \pi^{e_H-e_J}}^s ,
\end{equation}
where~$C_s,D_s>0$ may depend on~$s,G,H$. 
Using~$s := \ceil{(4+2\ell+\tau)/\beta}$, for $n \ge n_0(C_s)$ it follows that 
\begin{equation*}
\Pr\Bigpar{ N_{\rho,G,H}(i) \ge n^{\beta} \max_{G \subseteq J \subseteq H} n^{v_H-v_J} \pi^{e_H-e_J} \text{ and } \cG_i } 
\le C_s \cdot n^{-\beta s} \le n^{-(3+2\ell+\tau)} . 
\end{equation*}
This completes the proof by a standard union bound argument (that accounts for all possible~$i,G,H,\rho$), 
since~$n^{\beta}\pi^{e_H-e_J} \le n^{\alpha/9 + (e_J-e_H)}$ follows from~$\pi = n^{-1+4\beta}$, $e_H-e_J \le 2\ell$,  
and the definition~\eqref{def:alpha:beta} of~$\beta$. 
\end{proof}

In the remainder of this section we use \refT{thm:ext} to prove \refCl{cl:fidelity} and~\ref{cl:bounded}.   We begin with some
precise notation for counting extensions.  
To  account for `partial' copies of~$F$ which extend some fixed set of triples~$R$ and vertices~$S$, 
we introduce 
\begin{align}
\label{def:Gamma:RFkS}
\Gamma_{R,S,F,k}(i) &:= \bigcpar{F' \in \bF_{F}: \ R \subseteq F', \ |\cH_i \cap F' \setminus R| \ge k, \text{ and } S \subseteq V(F')},
\end{align}
where the set~$\bF_F$ of all $F$--copies is defined as in \refS{sec:variables}. 
Similarly, to account for `overlapping' partial copies of~$F$ and~$K$ extending certain triples 
(note 
that~$|K' \cap Q(i) \setminus \set{g}|=1$ below), we introduce 
\begin{align}
\begin{split}
\label{def:W:fFkgK}
\cW_{f,F,k,g,K}(i) &:= \bigset{(F',K',h) \in W_{f,F,k}(i) \times W_{g,K,e_K-2}(i) \times Q(i): \  \\
& \qquad \qquad \qquad \qquad F' \neq K'  \text{ and } \set{h}= F'  \cap K' \cap Q(i) \setminus \set{f,g}},
\end{split}\\
\label{def:W:fFkK}
\cW^+_{f,F,k,K}(i) &:= \bigset{(g,F',K',h): \ (F',K',h) \in \cW_{f,F,k,g,K}(i) \text{ and } g \in F' \cap Q(i) \setminus \set{h}} ,
\end{align}
where the extension-sets~$W_{uvw,F,k}(i)$ are defined as in \refS{sec:variables}. 
Finally, we mention that our applications of~\eqref{eq:thm:ext} will hinge on the following 
simple 
consequence of the minimality of the obstructions~$F \in \fF^{+}$. 
\begin{lemma}\label{lem:minimality}
Let~$F \in \fF^+$ and~$G \subseteq F$ with~$v_G \ge 2$. 
Then~$e_G \le v_G -(2 + \indic{v_G \ge 4 \text{ and } G \neq F})$. 
\end{lemma}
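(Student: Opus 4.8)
The plan is to prove Lemma~\ref{lem:minimality} by a short case analysis on $v_G$, using the two defining properties of $\fF^+$: namely that an obstruction $F$ has $e_F = v_F - 2$ and $4 \le v_F \le \ell$, and crucially that $F$ contains \emph{no} proper subhypergraph $J \subsetneq F$ with $v_J \ge 4$ and $e_J = v_J - 2$. Since $G \subseteq F$ and $F$ is $3$-uniform, every triple of $G$ uses three of its vertices, so trivially $e_G$ is finite and we only need to rule out $e_G$ being too large.

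First I would handle the easy extremes. If $v_G \in \{2,3\}$ then a $3$-uniform hypergraph on at most $3$ vertices has $e_G \le 1$, and the claimed bound $e_G \le v_G - 2$ reads $e_G \le 0$ for $v_G = 2$ (true, no triple fits) and $e_G \le 1$ for $v_G = 3$ (true); the indicator is $0$ in both cases, so there is nothing more to check. If $G = F$ the indicator is again $0$ and the inequality $e_F \le v_F - 2$ is exactly the defining equation for members of $\fF^+$. So the only substantive case is $G \subsetneq F$ with $v_G \ge 4$, where the claim becomes $e_G \le v_G - 3$, i.e.\ we must show $e_G \ne v_G - 2$ and $e_G < v_G - 2$ cannot be improved—wait, more precisely we must show $e_G \le v_G - 3$.

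The heart of the argument: suppose for contradiction that $G \subsetneq F$, $v_G \ge 4$, and $e_G \ge v_G - 2$. I would first reduce to the case $e_G = v_G - 2$ by deleting triples from $G$ one at a time: removing a triple decreases $e_G$ by $1$ and does not increase $v_G$, so starting from $e_G \ge v_G-2$ we can find a subhypergraph $G' \subseteq G$ (still $\subsetneq F$, still with $v_{G'} \ge 4$ after possibly also not needing to delete vertices—one must be slightly careful that deleting a triple doesn't force a vertex to drop, but we may simply keep isolated vertices, or if $v_{G'}$ drops below $4$ at some point then just before that step we had exactly $v=4, e=2$ which is itself the forbidden configuration) with $e_{G'} = v_{G'} - 2$ and $v_{G'} \ge 4$. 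This $G'$ is a proper subhypergraph of $F$ with $v_{G'} \ge 4$ and $e_{G'} = v_{G'} - 2$, contradicting the defining minimality property of $\fF^+$. Hence $e_G \le v_G - 3 = v_G - (2 + 1)$, which is exactly the asserted bound when the indicator equals $1$.

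I expect the only real subtlety—and the main thing to get right—is the bookkeeping in the reduction step: ensuring that while peeling off triples from $G$ we genuinely land on a subhypergraph with at least $4$ vertices and the exact relation $e = v - 2$, rather than overshooting past it. The clean way is to argue on the quantity $e_G - (v_G - 2) \ge 0$ and observe each triple-deletion changes $e_G$ by $-1$ and $v_G$ by $0$ (if we retain all vertices) so this quantity strictly decreases by $1$ each step; pick the first moment it equals $0$. At that moment $v_G$ is unchanged from the start, hence still $\ge 4$, and the resulting hypergraph is a proper nonempty subhypergraph of $F$ (nonempty since $v-2 \ge 2 > 0$ triples remain), contradicting minimality. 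This also silently covers Remark~\ref{rem:cF} as a special case ($v_F = 5$ forces a $v=4$, $e=2$ sub-diamond), which is reassuring.
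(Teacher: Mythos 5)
Your proof is correct and is essentially the argument the paper intends: the lemma is stated there without proof as a ``simple consequence of the minimality'' of $F\in\fF^+$, and your reduction---deleting triples while retaining the full vertex set (so the vertex count stays fixed and $\ge 4$) until the first moment $e=v-2$, which produces a proper subhypergraph $J\subsetneq F$ forbidden by the definition of $\fF^+$---together with the trivial cases $v_G\le 3$ and $G=F$ is exactly that argument. One small correction: your closing aside is backwards, since Remark~\ref{rem:cF} concerns \emph{arbitrary} hypergraphs with $5$ vertices and $3$ triples (it is what shows no such configuration can be minimal, hence $\fF^+$ has no $5$-vertex members) and is not a special case of this lemma, which already assumes $F\in\fF^+$.
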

%

\subsubsection{Boundedness estimates: Proof of \refCl{cl:bounded}}\label{sec:cl:bounded}
Reinspecting \refS{sec:bounded},
we now formally define the random variables treated by \refCl{cl:bounded}:
\begin{align}
\label{eq:53}
\Pi_{uv,f}(i) & := \sum_{F \in \fF^+}|\Gamma_{\set{f},uv,F,e_{F}-2}(i)| , \\ 
\label{eq:54}
\Pi_{f,F,k,g}(i) & := |\Gamma_{\set{f,g},\emptyset,F,k}(i)| , \\
\label{eq:55}
\Phi_{f,F,k,g}(i) & := \sum_{K \in \fF^+}|\cW_{f,F,k,g,K}(i)|. 
\end{align}
Our plan is to show that the event~$\cC_i$ from \refT{thm:ext} implies the estimates~\eqref{eq:Phi:Est}--\eqref{eq:Pi:Est} of \refCl{cl:bounded}.

\noindent
{\bf Proof of \eqref{eq:Phi:Est}.}  
Assuming $|uv \setminus f| \ge 1$, it suffices to show that each term in~\eqref{eq:53} is~$o(n^{\alpha/2})$. 
Let~$ F \in \fF^+$.  
Consider a hypergraph~$ H $ on vertex-set $ V(F)$ of the form $H = F \setminus \{f_1, f_2\} $ and a hypergraph~$G$ defined to be the empty hypergraph on vertex-set $ V(G) :=  f_1 \cup u_1v_1$, where $|u_1v_1 \setminus f_1| \ge 1$.
We are interested in the number of injections of~$V(H)$ to~$[n]$ that map~$f_1$ onto the triple~$f$, map~$u_1v_1$ onto the pair~$uv$, and map the triples of~$H$ to triples of~$\cH_i$. 

We now apply Theorem~\ref{thm:ext}.  Note that $ e_H = e_F-2=v_H - 4$.
Consider $ G \subseteq J \subseteq H $.  
If~$V(J) \subsetneq V(H)$ then by \refL{lem:minimality} (as~$v_J \ge v_G \ge 4$) 
the subhypergraph of $F$ induced on $ V(J)$ has at most~$ v_J-3$ triples.  Therefore, as $f_1 \in F$
and $f_1 \notin J$, we have $ e_J \le v_J- 4 $ and thus
\[ (v_H - e_H) - ( v_J - e_J) \le  4  - 4 = 0 .\]
If~$V(J) = V(H)$ then we trivially have $(v_H - e_H) - ( v_J - e_J) = e_J - e_H \le 0$, too. 
Thus, for any such pair~$(G,H)$, by inequality~\eqref{eq:thm:ext} there are at most~$ n^{\alpha/9} $ relevant injections. 
By summing over the bounded number of such~$(G,H)$, we readily infer $\indic{|uv \setminus f| \ge 1}|\Gamma_{\set{f},uv,F,e_{F}-2}(i)| =O(n^{\alpha/9})$, completing the proof of~\eqref{eq:Phi:Est}.

\noindent
{\bf Proof of \eqref{eq:Delta:Est}.} 
Assuming $f \neq g$, it suffices to bound~\eqref{eq:54}. 
Let~$ F \in \fF $.
Consider a hypergraph $ H $ on vertex-set $ V(F)$ with $ H \subset F $ and $ |H| =k $ and a hypergraph~$ G$ defined to be the empty hypergraph on vertex-set~$ V(G) :=  f_1 \cup f_2 $, where~$ f_1,f_2 \in F \setminus H$ and~$f_1 \neq f_2$.
We are interested in the number of injections of~$ V(H)$ to~$[n]$ that map~$f_1$ onto~$f$, map~$ f_2 $ onto the triple~$g$, and map the triples of~$H$ to triples of~$\cH_i$. 

We again apply Theorem~\ref{thm:ext}. 
Consider $ G \subseteq J \subseteq H $.  It suffices to show
\begin{equation}
\label{eq:suffices}
(v_H - e_H) - ( v_J - e_J) \le e_F - (2+k) - \indic{ k < e_F -2}.
\end{equation}
If~$ V(J) =V(H)$ then the trivial estimate~$(v_H - e_H) - ( v_J - e_J) = e_J - e_H \le 0$ implies~\eqref{eq:suffices}. 
If~$V(J) \subsetneq V(H)$, then by \refL{lem:minimality} (as~$f_1 \neq f_2$ implies~$v_J \ge v_G \ge 4$) 
the subhypergraph of~$F$ induced on~$ V(J)$ has at most~$v_J-3$ triples.  In that case, as~$f_1, f_2 \in F$
and~$f_1, f_2 \notin J$, we have~$ e_J \le v_J- 5 $ and thus 
\[ (v_H - e_H) - ( v_J - e_J) \le  v_H - k - 5 = e_F - k -3, \]
establishing~\eqref{eq:suffices}.
Now inequality~\eqref{eq:Delta:Est} follows by similar reasoning as for~\eqref{eq:Phi:Est} above.

\noindent
{\bf Proof of \eqref{eq:Pi:Est}.} 
In view of~\eqref{eq:55}, this is equivalent to a bound on $|\cW_{f,F,k,g,K}|$.  As this bound will
also be needed in the proof of the fidelity estimates given in the next subsection, we now state general bounds
on~$|\cW_{f,F,k,g,K}|$ and~$|\cW^+_{f,F,k,g,K}|$.   Note that, by~\eqref{eq:55}, inequality~\eqref{eq:Pi:Est} follows immediately from~\eqref{eq:overlap} below. 
\begin{lemma}\label{lem:degree1}
The event~$\cC_i$ from \refT{thm:ext} implies the following for~$n \ge n_0(\ell,\tau,\alpha,\beta,A)$. 
For all~$F,K \in \fF^+ $ with $(F,K) \notin (\fF^+ \setminus \fF)^2 $ and $f,g \in Q(i)$ and  $0 \le k \le e_F-2$, 
\begin{align}\label{eq:overlap}
|\cW_{f,F,k,g,K}(i)| & \le n^{e_F-(2+k)+\alpha/8} .
\end{align}
Furthermore, for all~$F,K \in \fF $ and $f \in Q(i)$ and $0 \le k \le e_F-3$, 
\begin{align}\label{eq:overlap2}
|\cW^+_{f,F,k,K}(i)| & \le n^{e_F-(2+k)+\alpha/8} .
\end{align}
\end{lemma}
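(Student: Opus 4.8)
The plan is to reduce both bounds~\eqref{eq:overlap} and~\eqref{eq:overlap2} to applications of the extension estimate~\eqref{eq:thm:ext} from \refT{thm:ext}, by encoding the relevant overlapping-copy configurations as a single bounded hypergraph~$H$ with a distinguished rooted subhypergraph~$G$, and then checking the exponent bookkeeping. For~\eqref{eq:overlap}: an element of~$\cW_{f,F,k,g,K}(i)$ is a triple $(F',K',h)$ with $F' \cong F$, $K' \cong K$, $F'\cap K'\cap Q(i)\setminus\set{f,g}=\set{h}$, and $|\cH_i\cap F'\setminus\set{f,\ldots}|$ accounting for the $k$ triples of $F'$ lying in $\cH_i$ (and $e_K-2$ triples of $K'$ in $\cH_i$). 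I would take $H$ to be the abstract hypergraph obtained by gluing a copy of $F$ and a copy of $K$ along the triple~$h$ (and possibly identifying further vertices according to the overlap type), take $G$ to be the empty hypergraph on the vertex set $V(G):=f_1\cup g_1$ (the roots of $f$ and $g$), and restrict to the subhypergraph of $H$ consisting of the $k$ triples of the $F$-part and the $e_K-2$ triples of the $K$-part that are required to lie in $\cH_i$ — so $e_H = k + (e_K-2)$ while $v_H$ is the number of vertices these triples span. One counts injections $\psi:V(H)\to[n]$ with $\psi|_{V(G)}\equiv\rho$ (where $\rho$ sends the roots onto $f,g$) and $\psi(H)\subseteq\cH_i$; the number of such $\psi$ is at least $|\cW_{f,F,k,g,K}(i)|$ up to a bounded multiplicative constant (accounting for the finitely many overlap types and the positions of $h$, $f$, $g$ inside $F,K$).

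The core of the argument is then the exponent computation: for every $G\subseteq J\subseteq H$ one must show
\[
(v_H-e_H)-(v_J-e_J)\le e_F-(2+k)+\tfrac{\alpha}{9},
\]
after which \refT{thm:ext} (summing over boundedly many $(G,H)$ pairs and overlap types) yields~\eqref{eq:overlap} with $\alpha/8$ in place of $\alpha/9$, with room to spare. When $V(J)=V(H)$ this is the trivial bound $e_J-e_H\le 0$, which suffices since $e_F-(2+k)\ge 0$ (as $k\le e_F-2$). When $V(J)\subsetneq V(H)$ one invokes \refL{lem:minimality}: since $v_J\ge v_G\ge 4$ (the roots $f_1$ and $g_1$ are distinct triples, forcing at least $4$ vertices) and since the two root triples together with the missing triples are not in $J$, the induced subhypergraph of $F$ (resp.\ $K$) on $V(J)$ has a deficient number of triples, and the resulting count of $e_J$ against $v_J$ gives the needed slack. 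The hypothesis $(F,K)\notin(\fF^+\setminus\fF)^2$ — i.e.\ at least one of $F,K$ has $\ge 6$ vertices — is exactly what is needed to rule out the degenerate case where both copies are the $4$-vertex diamond (in which the overlap along a single triple $h$ would leave too little room and the exponent count would fail); in all other cases $e_H$ is large enough relative to $v_H$. For~\eqref{eq:overlap2} the difference is that $g$ is no longer rooted but instead ranges over $F'\cap Q(i)\setminus\set{h}$, costing one extra free vertex-orbit; this is absorbed because we now have $k\le e_F-3$ (one unit stricter), so the target exponent $e_F-(2+k)$ is the same after trading the gained root freedom against the tightened range of $k$, and the same $G\subseteq J\subseteq H$ case analysis goes through with $V(G)$ just the root triple $f_1$ together with (a root for) the distinguished overlap triple $h$.

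I expect the main obstacle to be the exponent bookkeeping in the case $V(J)\subsetneq V(H)$ when $J$ sits partly in the $F$-part and partly in the $K$-part: one must argue that \emph{both} induced subhypergraphs are simultaneously deficient (via \refL{lem:minimality} applied to $F$ on $V(J)\cap V(F)$ and to $K$ on $V(J)\cap V(K)$), and that the indicator terms $\indic{v_G\ge 4 \text{ and } G\neq F}$ fire in the right combinations — in particular one has to be careful that $J$ does not accidentally equal one of the root triples or the glued copy $F'$ or $K'$ in a way that makes a deficiency indicator vanish. Keeping track of which vertices of $J$ lie on the shared triple $h$ versus on $f$ versus on $g$, and handling the finitely many ways the copies can overlap beyond the forced intersection at $h$, is the fiddly part; but each individual case is a short arithmetic check, and the $n^{\alpha/9}$ slack in~\eqref{eq:thm:ext} together with the gap between $\alpha/9$ and $\alpha/8$ gives ample room to absorb the bounded number of cases and the bounded multiplicative constants.
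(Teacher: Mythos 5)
Your overall strategy is the paper's: glue the two partial copies into a single bounded hypergraph $H=F'\cup K'$, root at $G$ (the vertices of $f\cup g$, resp.\ $f$), and apply \refT{thm:ext}, checking for each $G\subseteq J\subseteq H$ that $(v_H-v_J)-(e_H-e_J)\le e_F-(2+k)$ via \refL{lem:minimality}. However, there is a genuine gap at exactly the point where the proof is hardest. Writing $A:=J\cap F'$ on $V(J)\cap V(F')$ and $B:=K'\cap(J\cup F')$ on $V(K')\cap(V(J)\cup V(F'))$, the arithmetic you outline only yields
\[
(v_H-v_J)-(e_H-e_J)\;\le\; e_F-(1+k)-\indic{v_A\ge 4}-\indic{B\neq K'},
\]
so in the configuration where $V(J)$ meets the $F'$-side in nothing more than the triple $f$ (i.e.\ $v_A=3$) \emph{and} all of $K'$ is already covered by $J\cup F'$ (i.e.\ $B=K'$), the exponent count falls short by a full power of $n$: you get $e_F-(1+k)$ instead of $e_F-(2+k)$. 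Your claim that the hypothesis $(F,K)\notin(\fF^+\setminus\fF)^2$ makes ``$e_H$ large enough relative to $v_H$'' in all remaining cases is not correct; even with $K\in\fF$ (or $F\in\fF$) the bookkeeping alone does not close this case. The paper's proof devotes its main effort to showing that this degenerate configuration is \emph{structurally impossible}: if $K$ is the diamond one gets an immediate contradiction from $|g\cap h|=2$ and $|f\cap h|\le 1$; otherwise one splits on whether $V(K')\subseteq V(F')$, producing in the first case a triple $g'\in K\setminus F$ forced inside $V(J)\cap V(K')$ (contradicting $V(A)=f$), and in the second case a triple $h'\in K'$ crossing the cut between $V(K')\setminus V(F')$ and $(V(K')\cap V(F'))\setminus f$ that cannot lie in $B$ (contradicting $B=K'$); the latter uses a connectivity-type property of members of $\fF$. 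None of this appears in your plan, and without it the lemma as stated is not reached.

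Two smaller points. First, the role you assign to the hypothesis $(F,K)\notin(\fF^+\setminus\fF)^2$ is misplaced: it is not used to make the generic exponent count work, but precisely inside the impossibility argument for the degenerate case (if $K$ is the diamond, then $F\in\fF$ is what forces $v_A>3$). Second, for \eqref{eq:overlap2} the paper roots only at $f$ (the triples $g,h$ lie inside $V(F')$ and are determined by the embedding); rooting additionally at $h$, as you suggest, would not count the quantity $|\cW^+_{f,F,k,K}(i)|$, since $h$ is not externally specified there. The extra observations needed for \eqref{eq:overlap2} are that $g\in K\cap F$ forces $g'\neq g$ and $g\subseteq V(F')$ forces $g\cap X=\emptyset$ in the two contradiction subcases, rather than a trade of root freedom against the range of $k$.
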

\begin{proof}
We begin with the bound~\eqref{eq:overlap} for~$|\cW_{f,F,k,g,K}|$. 
Let~$ F, K \in \fF^+ $ such that at least one of~$ F,K $ is in~$\fF$. 
Let~$ F' \subset F $ with~$|F'| = k$ and~$V(F')=V(F)$. 
Let~$K' \subset K$ with~$|K'| = e_K-2$ and~$V(K')=V(K)$. 
Consider~$H = F' \cup K'$ such that there are triples~$ f,h,g \in \binom{V(H)}{3} $ such~that
\begin{equation}
f,g,h \not\in H,
\qquad 
K' \cup \{g,h\} = K,
\quad \text{ and } \quad
F' \cup \{ f,h \} \subseteq F.
\end{equation}
We allow~$f=g $, but we impose the conditions~$f \neq h$ and~$g \neq h$. 
Our aim is to bound the 
number of embeddings of~$H$ into~$\cH_i$ with the property that~$f$ and~$g$ map onto specified 
available triples of~$\cH_i$. We thus define~$G$ to be the 
subhypergraph of~$H$ induced on vertex-set~$V(G) := f  \cup g $. 

We apply Theorem~\ref{thm:ext}.  Consider $ G \subseteq J \subseteq H $.  
Let~$A := J \cap F'$ with vertex-set $V(A) := V(J) \cap V(F')$. 
Let~$B := K' \cap ( J \cup F' ) $ with vertex-set~$V(B) :=  V(K') \cap \left(  V(J) \cup V(F') \right)$.  
Note that $e_{F' \cap K'}+ e_J=e_A+e_B$. 
Combined with a similar decomposition of $v_A+v_B$, 
it is routine to see that 
\[   (v_H - v_J) - ( e_H - e_J) 
= (v_{F'}+v_{K'}) - (v_A+v_B)- ( e_{F'} + e_{K'}) + (e_A + e_B) .
\]
Note that~$f \subseteq V(A) $ and~$f \in F \setminus F'$. 
  Therefore, as $A \subseteq F \in \fF^+$,  we have~$ e_A -v_A \le - 3 - \indic{ v_A \ge 4 }$ by \refL{lem:minimality}.  
Note further that~$ g, h  \subseteq V(B)$ and~$g,h \in K \setminus K'$. 
As~$B \subseteq K \in \fF^+$, we infer $e_B-v_B \le -4 - \indic{ B \neq K' }$ 
by \refL{lem:minimality} (as~$g \neq h$ implies~$v_B \ge |g \cup h| \ge 4$). 
Noting~$v_{F'}-e_{F'}=e_F+2-k$ and $v_{K'}-e_{K'}=4$, 
it follows that 
\[    (v_H - v_J) - ( e_H - e_J) 
 \le e_F -( 1+ k)  - \indic{ v_A \ge 4 } -  \indic{ B \neq K'} .  \]
By similar reasoning as for~\eqref{eq:Delta:Est}--\eqref{eq:Pi:Est} above, we can apply Theorem~\ref{thm:ext} to complete the proof of (\ref{eq:overlap}), except in the case that~$v_A = 3$ and~$B = K' $ both hold.
We now show that this situation is not possible.

Assume for the sake of contradiction that~$v_A = 3$ and~$B = K' $ both hold. 
We first observe that if~$K$ is the diamond then we immediately have a contradiction. 
Indeed, in this case~$F \in \fF$ by assumption, so that~$|g \cap h|=2$ and~$|f \cap h| \le 1$, 
which in turn implies $v_A \ge |f \cup (g \cap h)| \ge 3+2-1>3$.
So we henceforth assume~$K \in \fF $.   
Note that we have~$V(A) = f$ and~$V(B) = V(K')$, so~$V(J) = f \cup (V(K') \setminus V(F'))$ follows.  
We consider two cases.  
First, if~$V(K') \subseteq V(F')$ then one can show that there exists a triple~$g' \in K \setminus \set{f}$ 
which satisfies~$g' \subseteq V(J) \cap V(K')$, contradicting~$V(A) =f$.
(One can show existence of~$g'$ as follows. 
By definition of~$\fF$ there must be at least one triple~$g' \in K$ that does not appear in~$F \in \fF$, 
which by construction satisfies~$g' \neq f$. 
If~$g'=g$ then~$g' \subseteq V(G) \subseteq V(J)$; 
otherwise~$g' \in K \setminus (F \cup \set{g}) \subseteq K'$ implies~$g' \in J$ due to~$B=K'$, establishing the claim.)   
Second, if~$ V(K') \not\subseteq V(F')$ then one can show that there exists a triple~$h' \in K'$ 
that intersects both~$X:=V(K') \setminus V(F')$ and~$Y:=(V(K') \cap V(F')) \setminus f$, 
which is not in~$B=(K' \cap F') \cup (K' \cap J)$ due to~$V(J)=f \cup (V(K') \setminus V(F'))$, 
contradicting~$B=K'$.  
(One can show existence of~$h'$ as follows. 
First, using~$K \in \fF$ it~is~an easy exercise to verify that, for any vertex-set~$W \subseteq V(K)$ with~$|W| \le 3$ and  
any partition of~$V(K)\setminus W$ into two nonempty parts~$X,Y$, there is at least one triple~$h' \in K$ that intersects both~$X$ and~$Y$. 
Second, applying this with~$W:=V(K') \cap f$ and~$X,Y$ as defined above, 
it remains to verify that no triple in~$K \setminus K'=\set{g,h}$ intersects both~$X$ and~$Y$. 
This is trivial for~$h \subseteq V(F')$. 
For~$g$ this is also trivial if~$g \cap V(F') = \emptyset$. 
Otherwise~$g \subseteq V(G) \subseteq V(J)$ and~$v_A=3$ imply~$g \cap V(F') \subseteq f$, 
so that~$|g \cap V(F')| \ge 1$ enforces~$|g \cap f| \ge 1$, 
which due to~$f \cap (X \cup Y) = \emptyset$ establishes the claim.)

Finally, we turn to the bound~\eqref{eq:overlap2} for~$|\cW^+_{f,F,k,K}(i)|$. 
Let $ F , K \in \fF$.  Let~$ F' \subset F $ with~$ |F'| = k $  and~$V(F')=V(F)$.  
Let~$K' \subset K $ with~$|K'| = e_K-2 $ and~$V(K')=V(K)$. 
Let $H = F' \cup K'$ such that there are triples $ f,h,g \in \binom{V(F')}{3} $ such~that
\begin{equation}
f,g,h \not\in H, 
\qquad 
 K' \cup \{g,h\} = K, 
\quad \text{ and } \quad
 F' \cup \{ f,g,h \} \subseteq F.
\end{equation}
We allow~$f=g $, but we impose the conditions~$f \neq h$ and~$g \neq h$. 
Our aim is to bound the 
number of embeddings of~$H$ into~$\cH_i$ with the property that~$f$ maps onto a specified 
available triple of~$\cH_i$.  We thus define~$G$ to be the empty hypergraph on vertex-set~$V(G):= f$. 
We can now follow the argument for~\eqref{eq:overlap} from the preceding
paragraphs (essentially verbatim, exploiting in the final contradiction arguments 
that here~$g \in K \cap F$ ensures~$g' \neq g$, and that here~$g \subseteq V(F')$ implies~$g \cap X=\emptyset$) 
to establish~\eqref{eq:overlap2}.
\end{proof}

\subsubsection{Fidelity estimates: Proof of \refCl{cl:fidelity}}\label{sec:cl:fidelity}
Reinspecting \refS{sec:ECh}, the random variables treated by \refCl{cl:fidelity} 
satisfy 
\begin{align}
\label{def:upsilon:f}
\Upsilon_f(i) & \le \sum_{L,K\in \fF} |\cW_{f,L,e_L-2,f,K}(i)|, \\
\label{def:Psi:f}
\Psi_{F'}(i) & := \sum_{\substack{f,g \in F' \cap Q(i):\\f \neq g}} \; \sum_{L,K \in \fF^+} |\cW_{f,L,e_{L}-2,g,K}(i)|, \\
\label{def:Lambda:fFk1}
\Lambda_{f,F,k-1}(i) & := \sum_{K \in \fF} |\cW^+_{f,F,k-1,K}(i)| .
\end{align}
The estimates in \refCl{cl:fidelity} now follow from Lemma~\ref{lem:degree1} 
(since~$F' \in \fF$ implies~$|f \cap g| \le 1$ in~\eqref{def:Psi:f}, it is not difficult to see that the `two diamonds' case~$L,K \in \fF\setminus \fF^+$ only contributes~$O(1)$ to~$\Psi_{F'}$).

\section{Concluding remarks}\label{sec:conclusion} 
It would be interesting to further explore the high-girth triple-process
and its connection with the random triangle removal process. 
This removal process was originally formulated by Bollob\'as and Erd\H{o}s,
who conjectured that at the end~$\Theta( n^{3/2} ) $ edges remain (recall 
that the remaining edges of the removal process correspond to the terminal edge-set~$E(m)$ in the high-girth process with~$\ell=4$, cf.~\refS{sec:high-girth}). 
Bohman, Frieze, and Lubetzky~\cite{BFL} proved an approximate version of this conjecture, showing that typically~$n^{3/2+o(1)}$ edges remain. 
It is natural to conjecture that the same result also holds for the high-girth process, 
since obstructions on more than $4$~vertices have a negligible impact during the early evolution
 (see \refR{rem:main} and \refT{thm:main}). 
\begin{conjecture}\label{conj:final}
Let $ \ell \ge 4$. 
Let the random variable~$m$ be the total number of steps in the high-girth triple-process that produces a
partial Steiner system with girth greater than $\ell$. 
Then,  with probability~$1-o(1)$, 
\[ \binom{n}{2} - 3m = n^{3/2+o(1)}. \]
\end{conjecture}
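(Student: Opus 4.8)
The plan is to run a two-phase analysis that reduces \refConj{conj:final} to a suitably robust version of the Bohman--Frieze--Lubetzky analysis~\cite{BFL} of random triangle removal. Note first that when an available triple $xyz$ is added to $\cH_i$ exactly the three pairs $xy,xz,yz$ leave $E(i)$, so $|E(m)| = \binom{n}{2}-3m$ holds deterministically; it therefore suffices to show that with probability $1-o(1)$ the process terminates with $|E(m)| = n^{3/2+o(1)}$ remaining edges. The guiding principle (see \refR{rem:main}) is that $q(t)$ stays bounded away from $0$ as $t\to 1/6$, so the available-triple hypergraph $\cQ(i)$ with vertex set $E(i)$ and hyperedge set $Q(i)$ has $\Theta(p^3 q n^3)$ hyperedges --- the same order as the $\Theta(p^3 n^3)$ triangles of $G(i)$ --- and hence the high-girth process should behave like triangle removal and terminate when $p = 1-6t = n^{-1/2+o(1)}$, equivalently $|E(i)| = n^{3/2+o(1)}$.

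\emph{Phase 1 (bulk).} First I would extend the differential equation method analysis of \refT{thm:main} so that the trajectory estimates \eqref{eq:Qi}--\eqref{eq:WuvwFk} hold with probability $1-o(1)$ down to $p = n^{-1/2+\eps}$ for an arbitrarily small fixed $\eps>0$ (rather than only to $p = n^{-\beta}$). The structure of \refS{sec:proof} is unchanged; the adjustments are (i) reproving the crude extension estimate \refT{thm:ext} under the weaker hypothesis $|Q(i)| \ge n^{3/2+2\eps}$ in place of $n^{3-4\beta}$, which yields weaker but still polynomially useful bounds on the obstruction extensions $W_{uvw,F,k}$ (which are themselves sparse near the end), and (ii) tracking, in addition, the finer quasirandomness statistics of $G(i)$ and $\cQ(i)$ --- codegrees and counts of bounded configurations --- that the endgame requires. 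Throughout Phase 1 the obstructions in $\fF$ contribute only a lower-order correction: each step closes $\approx 3\hy = 3p^2 q n$ available triples through edge deletion but only $\approx \sum_{F \in \fF}\hw_{F,e_F-2} = \Theta(p^3 q n)$ through newly created near-obstructions, a relative perturbation of size $\Theta(p) = o(1)$, so the trajectories remain the triangle-removal trajectories multiplied by the appropriate powers of $q$.

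\emph{Phase 2 (endgame).} From $p = n^{-1/2+\eps}$ onwards I would adapt the self-correcting martingale analysis of~\cite{BFL}, taking from Phase 1 the quasirandomness of $G(i)$ and $\cQ(i)$ and the bounds on the obstruction extensions. The goal is to show that $G(i)$ and $\cQ(i)$ stay quasirandom --- with $|Y_{uv}(i)| = (1+o(1))p^2 q n$ and controlled codegrees --- for as long as triangle removal stays quasirandom, so that the high-girth process stays on trajectory and terminates precisely when $p = n^{-1/2+o(1)}$, pinning down $|E(m)| = n^{3/2+o(1)}$. The only departure from triangle removal is again the obstruction effect, which deletes an extra $\Theta(p) = o(1)$ fraction of the available triples removed at each step; hence what is really needed is a version of the \cite{BFL} estimates that is robust to such a vanishing multiplicative perturbation of the one-step dynamics --- and the self-correcting structure of those estimates, which makes them stable under $o(1)$ perturbations, is precisely what should make this possible.

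\emph{Main obstacle.} The hard part is Phase 2: reproducing the delicate self-correcting estimates of~\cite{BFL} is already substantial, and here one must carry them out simultaneously for the coupled pair $(G(i),\cQ(i))$ and in a form insensitive to the $\Theta(p)$ obstruction perturbation, controlling the sparse graph $G(i)$, the degree and codegree profile of $\cQ(i)$, and the obstruction-driven deviations all at once while $p$ decreases to $n^{-1/2+o(1)}$. A secondary obstacle lies in Phase 1: carrying \refT{thm:main} and \refT{thm:ext} down to $p = n^{-1/2+\eps}$ forces a recalibration of the choices of $\alpha,\beta,A$ and of the variation equations \eqref{eq:Y:cond} and \eqref{eq:W:cond}, since several of the error bounds in \refS{sec:proof} degrade as $p$ decreases. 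With both phases in hand, concatenating them yields $\binom{n}{2}-3m = |E(m)| = n^{3/2+o(1)}$ with probability $1-o(1)$, as required.
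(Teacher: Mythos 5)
The statement you are addressing is not proved in the paper at all: it is stated as \refConj{conj:final}, an open conjecture, and the paper proves only the much weaker bound $\binom{n}{2}-3m = O(n^{2-\beta})$ via \refT{thm:main}. Your submission is accordingly a research programme rather than a proof, and both of its phases rest on exactly the two unproven ingredients that the paper itself singles out as the open directions (sharpening the extension estimate beyond the union bound, and establishing self-correcting estimates \`a la Bohman--Frieze--Lubetzky). Deferring the entire difficulty to ``adapt \cite{BFL}'' does not close the gap: the coupled control of $G(i)$ and the available-triple hypergraph, together with the obstruction terms, is precisely the content of the conjecture.

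Beyond this global issue, two concrete steps of Phase 1 would fail as described. First, the crude extension estimate \refT{thm:ext} cannot be ``reproved under the weaker hypothesis $|Q(i)|\ge n^{3/2+2\eps}$'' by the paper's method: the bound \eqref{eq:lem:edges} takes a union over all $m_0=\Theta(n^2)$ steps with per-step probability $1/\min_j|Q(j)|$, so the resulting per-triple probability becomes $\pi \approx n^{2}/n^{3/2+2\eps}=n^{1/2-2\eps}\gg 1$, i.e.\ the bound is vacuous once $|Q|$ drops to $n^{3/2+o(1)}$ (even the refined sum $\sum_j 1/|Q(j)| \approx p^{-2}/(12qn)$ is only $n^{-2\eps}$ at $p=n^{-1/2+\eps}$, far from the truth $\Theta(1/n)$); a genuinely different argument is needed. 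Second, pushing \eqref{eq:Qi}--\eqref{eq:WuvwFk} to $p=n^{-1/2+\eps}$ ``with the structure of \refS{sec:proof} unchanged'' is not a matter of recalibrating $\alpha,\beta,A$: the relative error is $e_Y/\hy \asymp 1/(p^{A+2}qn^{1-\alpha})$, and since the trend hypothesis forces $A\ge 2\ell\ge 8$, this exceeds $1$ already when $p\le n^{-(1-\alpha)/(A+2)}$, i.e.\ at $p=n^{-c}$ for a small constant $c$, long before $p=n^{-1/2}$. Reaching the conjectured window requires a different error-function structure with built-in self-correction from the start, which is the substance of the open problem rather than a perturbation of the existing proof.
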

\noindent
In this paper we showed that~$\binom{n}{2} - 3m= O(n^{2-\beta})$, without making any attempt to optimize the constant~$\beta=\beta(\ell)>0$. 
As a first step towards \refConj{conj:final}, there are two natural ways to improve~$\beta$: (a)~to sharpen the hypergraph extension bound~\eqref{eq:thm:ext} by refining the simple union bound based inequality~\eqref{eq:lem:edges}, and (b)~to establish self-correcting estimates for the key variables, as in random triangle removal~\cite{BFL0,BFL}. 
It would also be interesting to understand (the early evolution of) the high-girth triple-process with~$\ell=\ell(n) \to \infty$.

We close by noting that our results on the high-girth triple-process suggest a lower bound
on the number of high-girth Steiner triple systems, following the argument of Keevash on
counting Steiner triple systems~\cite{counting}.  For concreteness, consider the case of $ \ell = 6$;
so we are interested in counting the number of Steiner triple systems on $n$ vertices that contain
no copy of the so-called Pasch configuration~\cite{AntiPasch}. It follows from our results that the number of choices 
at step~$i+1$ of the process, for $i =0 ,\dots, (1-o(1)) n^2/6$, is roughly 
\[ 
|Q(i)| \approx \hq(i/n^2) \approx \frac{n^3}{6} \cdot \left( 1 -\frac{ 6i}{n^2} \right)^3 \cdot \exp \left\{ - \left( \frac{ 6 i}{ n^2} \right)^3 \right\} . \]
Assuming that we can establish sufficient control on the error terms, the 
number of ways to complete the process is then (roughly) at least
\[ N_1 := 
\exp \left\{ \frac{n^2}{6}  \log \left( \frac{n^3}{6} \right)  + 3 \sum_{i=1}^{n^2/6} \log \left(1- \frac{ 6i}{n^2} \right)     - \frac{6^3}{ n^6} \sum_{i=1}^{n^2/6} i^3   \right\}
\approx \exp \left\{  \frac{n^2}{6} \left[  \log\left(\frac{n^3}{6}\right)  -3 - \frac{1}{4}  \right]  \right\} .
\]
On the other hand, a given Steiner triple system can be realized as roughly
\[ N_2 := \left( \frac{n^2}{6} \right)! \approx \exp \left\{ \frac{n^2}{6} \left[ \log\left(\frac{n^2}{6}\right) - 1 \right] \right\} \]
sequences of this kind.
If we assume that each triple system produced by the high-girth triple-process can be completed to a Pasch-free 
Steiner triple system (which would
require not only an affirmative answer of the Erd\H os--Question~\ref{quest:proper}, but also a version for pseudo-random 
triple systems), then this suggests that the number of Pasch-free Steiner triple systems is approximately at least 
\begin{equation}\label{eq:Pasch}
\frac{ N_1}{N_2} \approx \exp \left\{  \frac{ n^2}{6} \left[ \log n - 2 - \frac{1}{4}  \right] \right\}  = \biggpar{\frac{n}{e^{9/4}} }^{ \frac{ n^2}{6}},
\end{equation}
and it would be interesting to know whether their number is indeed~$\bigpar{(1+o(1))n/e^{9/4}}^{n^2/6}$.

\small

\normalsize
\end{document}